\newtheorem{thm}{Theorem}[section]
\newtheorem{prop}[thm]{Proposition}
\newtheorem{cor}[thm]{Corollary}
\newtheorem{lem}[thm]{Lemma}
\theoremstyle{definition}
\newtheorem{dfn}[thm]{Definition}
\newtheorem{rmk}[thm]{Remark}
\numberwithin{equation}{section}
\newcommand{\spa}{\textrm{span}}
\newcommand{\cM}{\mathsf{M}}
\newcommand{\cH}{\mathcal{H}}
\newcommand{\Id}{\textrm{Id}}
\newcommand{\N}{\mathbb{N}}
\newcommand{\C}{\mathbb{C}}
\newcommand{\Z}{\mathbb{Z}}
\newcommand{\wt}{\widetilde}
\newcommand{\cB}{\mathsf{B}}
\newcommand{\VN}{\textup{VN}}
\newcommand{\cHR}{\mathcal{H}_{\mathbb{R}}}
\newcommand{\cFq}{\mathcal{F}_q}
\newcommand{\exE}{\mathbb{E}}
\newcommand{\Hecke}{\C_q[W]}
\newcommand{\VNHecke}{\VN_q(W)}
\newcommand{\Heckel}{\C_q^l[W]}
\newcommand{\HeckeOne}{\C_q^1[W]}
\newcommand{\Heckelprime}{\C_q^{l'}[W]}
\newcommand{\cA}{\mathsf{A}}
\title{On MASAs in $q$-deformed von Neumann algebras}
\author{Martijn Caspers}
\address{Mathematisch Instituut, Universiteit Utrecht,
	Budapestlaan 6, 3584 CD Utrecht,
	The Netherlands}
\email{m.p.t.caspers@uu.nl}
\author{Adam Skalski}
\address{Institute of Mathematics of the Polish Academy of Sciences, ul. \'{S}niadeckich 8, 00-656 Warsaw, Poland}
\email{a.skalski@impan.pl}
\author{ Mateusz Wasilewski}
\address{Institute of Mathematics of the Polish Academy of Sciences,  ul. \'{S}niadeckich 8, 00-656 Warsaw, Poland}
\email{mwasilewski@impan.pl}
\begin{document}
\begin{abstract}
We study certain $q$-deformed analogues of the maximal abelian subalgebras of the group von Neumann algebras of free groups.
The radial subalgebra is defined for Hecke deformed von Neumann algebras of the Coxeter group $(\mathbb{Z}/{2\Z})^{\star k}$ and shown to be a maximal abelian subalgebra which is singular and with Puk\'anszky invariant $\{\infty\}$. Further all non-equal generator masas in the $q$-deformed Gaussian von Neumann algebras are shown to be mutually non-unitarily conjugate.
\end{abstract}

\subjclass[2010]{Primary: 46L10, Secondary: 46L65}

\keywords{maximal abelian subalgebras; singular masas; Hecke von Neumann algebra; q-Gaussian algebras}

\maketitle

\section{Introduction}

Our aim is to investigate maximal abelian subalgebras in certain ${\rm II}_1$-factors that can be viewed as deformations of $\VN(\mathbb{F}_n)$. Our particular interest lies in the analysis of counterparts of the radial masa $A_r$ in $\VN(\mathbb{F}_n)$, studied for example in \cite{BocaRad} and in \cite{Stuart} (see also \cite{Trenholme}). The main open problem concerning the radial masa in
$\VN(\mathbb{F}_n)$ is the question whether it is isomorphic to the generator masa(s); so far they share all the known properties, such as maximal injectivity, same Puk\'anszky invariant, etc. They are also known not to be unitarily conjugate (see Proposition 3.1 of \cite{Stuart}). More generally, radial masas have been studied for von Neumann algebras of groups of the type $(\Z/_{n\Z})^{\star k}$ in \cite{Trenholme} and \cite{BocaRad}.

Here we want to analyse the behaviour of counterparts of the radial/generator masa in some deformed versions of $\VN(\mathbb{F}_n)$ or $\VN((\Z/_{n\Z})^{\star k})$; more specifically in Hecke deformed von Neumann algebras of right-angled Coxeter groups $\VNHecke$ of  Dymara (\cite{Dymara}, see also \cite{Garncarek} and \cite{Caspers}) and in $q$-deformed Gaussian von Neumann algebras $\Gamma_q(\cHR)$ of Bo\.zejko, K\"ummerer and Speicher (\cite{BozejkoSpeicher}). In the former case we can naturally define the radial subalgebra (and not the generator one), and in the latter the object that intuitively corresponds to the radial subalgebra is in fact obviously isomorphic to the generator one (as studied by Ricard in \cite{ricard05qfactor} and further by Wen in \cite{Wen} and Parekh, Shimada and Wen in \cite{qMaxInjective}). We show in Section \ref{SectionqGaussian} however that the different generator masas inside the $\Gamma_q(\cHR)$ are not unitarily conjugate.

Note that another example of a counterpart of the radial subalgebra in $\VN(\mathbb{F}_n)$ was studied and shown to be maximal abelian and singular in \cite{AmauryRoland}. It was a von Neumann subalgebra of the algebra $L^{\infty}(O_N^+)$, which shares many properties with $\VN(\mathbb{F}_n)$, although very recently was shown to be non-isomorphic to the latter in \cite{MikeRoland}.

The plan of the paper is as follows: after finishing this section with introducing certain notations, in Section 2 we define the radial subalgebra of the Hecke deformed von Neumann algebra $\VNHecke$ and show it to be maximal abelian. In Section 3 we compute its Puk\'anszky invariant and deduce its singularity. Finally Section 4 discusses the non-unitary-conjugacy of a (continuous family of) different generator masas in the $q$-deformed Gaussian von Neumann algebras.


\vspace{0.3cm}

\noindent {\bf Notation.} Throughout this paper by a {\bf masa} we mean a maximal abelian von Neumann subalgebra of a given von Neumann algebra $\cM$.
Let $U(\cM)$ be the group of unitaries in $\cM$.
For a (unital) subalgebra $\cA \subseteq \cM$ we define the {\bf normalizer} of $\cA$ in $\cM$ as
\[
N_{\cM}(\cA) = \{ u \in U(\cM) \mid u \cA u^\ast \subseteq \cA \}.
\]
A subalgebra $\cA \subseteq \cM$ is called {\bf singular} if $N_{\cM}(\cA) \subseteq \cA$.

$\N_0$ denotes the natural numbers including 0.

\section{The radial Hecke masa}\label{Sect=Masa}
In this section we show that right angled Hecke von Neumann algebras admit a radial algebra and prove that it is in fact a masa.

Let $W$ denote a {\bf right-angled Coxeter group}. Recall that this is the universal group generated by a finite set $S$ of elements of order 2, with the relations forcing some of the distinct elements of $S$ to commute, and some other to be free. This is formally encoded by a function $m:S \times S \setminus \{(s,s):s \in S\} \to \{2, \infty\}$ such that for all $s,t \in S, s \neq t$ we have
\[(st)^{m(s,t)} = e\]
(and $(st)^{\infty}=e$ means that $s$ and $t$ are free; necessarily $m(s,t) = m(t,s)$). We will always associate to $W$ the length function $|\cdot|:W \to \N_0$ given by the generating set $S$. All the information about $W$ is encoded by a graph $\Gamma$ with a vertex set $V\Gamma= S$ and the edge set $E\Gamma=\{(s,t) \in S \times S: m(s,t)=2\}$. Let $q\in(0, 1]$ and put $ p = \frac{q-1}{q^\frac{1}{2}}$ (note that our convention on $q$ means that $p\leq 0$).  The algebra $\Hecke$ is a *-algebra with a linear basis $\{T_w: w \in W\}$
satisfying the conditions ($s \in S, w \in W$)
\[ T_s T_w = \begin{cases} T_{sw} & \textup{ if }  |sw| > |w|, \\
T_{sw} + p T_w &  \textup{ if }  |sw| < |w|.
\end{cases}\]
The algebra $\Hecke$ acts in a natural way (via bounded operators) on the space $\ell^2(W)$ and its von Neumann algebraic closure in $B(\ell^2(W))$ will be denoted by $\VNHecke$. The vector $\delta_e \in \ell^2(W)$ will sometimes be denoted by $\Omega$; the corresponding vector state $\tau:=\omega_\Omega$ on $\VNHecke$ is a faithful trace. More generally to any element $T \in \VNHecke$ we can associate its {\bf symbol} $T\Omega$, and as $\Omega$ is a separating vector for $\VNHecke$ this correspondence is injective. Finally note that using the right action of the Hecke algebra on itself we can define another von Neumann algebra acting on  $\ell^2(W)$, say $\VNHecke^r$. It is obviously contained in the commutant of $\VNHecke$; in fact Proposition 19.2.1 of \cite{Davisbook} identifies it with $\VNHecke'$.

We will write in what follows $L$ to denote the cardinality of $S$.

\vspace{0.3cm}

Hecke von Neumann algebras were first considered in \cite{Dymara} and \cite{DymaraEtAl} in order to study weighted $L^2$-cohomology of Coxeter groups. In \cite{DymaraEtAl} the authors raised a natural question: how large is the centre of $\VNHecke$? A precise answer for the right-angled case was found   in \cite{Garncarek}, where Garncarek showed the following result.

\begin{thm}\label{Thm=Factoriality}
Let $\vert S \vert \geq 3$ and assume that $\Gamma$ is irreducible. Then for $q \in [ \rho, 1]$ the right-angled Hecke von Neumann algebra $\Hecke$ is a ${\rm II}_1$ factor and for $(0, \rho)$ we have that $\Hecke$ is a direct sum of a ${\rm II}_1$-factor and $\mathbb{C}$. Here $\rho$ is the radius of convergence of the fundamental power series $\sum_{k =0}^\infty \vert \{ w \in W \mid \vert w \vert = k \} \vert z^k$.
\end{thm}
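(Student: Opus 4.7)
The plan is to follow Garncarek's approach and analyse $Z(\VNHecke)$ directly through the symbol map $T\mapsto T\Omega$, which is injective because $\Omega$ is separating. Given $z\in Z(\VNHecke)$, I would write its symbol as $\xi_z := z\Omega = \sum_{w\in W} c_w \delta_w \in \ell^2(W)$. The centrality condition $[z,T_s]=0$ for all $s\in S$, applied to $\Omega$ and expanded using the Hecke relations together with the deformation parameter $p = (q-1)/q^{1/2}$, translates into a system of linear recursions among the coefficients $c_w$, naturally parametrised by reduced expressions in $W$. Since $z$ also lies in $\VNHecke' = \VNHecke^r$, one can alternatively use the right action to double-check these relations.

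The heart of the argument is to solve this recursion for an arbitrary right-angled $W$. One exhibits a distinguished \emph{spherical} formal solution $\xi$ in the algebraic completion of $\Hecke$, essentially arising from the one-dimensional character $T_s\mapsto q^{1/2}$, and shows that up to scalars this is the \emph{only} solution. Square-summability of $\xi$ in $\ell^2(W)$ then reduces to a summability condition on the weighted growth of $W$ that is controlled precisely by the threshold $\rho$. For $q\in [\rho, 1]$ the formal solution $\xi$ fails to define an element of $\ell^2(W)$, so $Z(\VNHecke)=\C\cdot 1$ and $\VNHecke$ is a factor. For $q\in(0,\rho)$ the vector $\xi$ is square-summable and produces a bounded, non-scalar central element, hence a non-trivial central projection that yields a one-dimensional direct summand; a small separate argument, applied to its orthogonal complement, shows that the other summand still has trivial centre.

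To conclude, the factor obtained is of type ${\rm II}_1$ because $\tau$ restricts to a faithful normal tracial state and, under the assumptions $|S|\geq 3$ and $\Gamma$ irreducible, $\VNHecke$ is infinite-dimensional. These hypotheses also exclude the degenerate cases of finite $W$ and of the infinite dihedral group $\Z/2\Z\ast\Z/2\Z$, and prevent $W$ from splitting as a non-trivial direct product of standard parabolic subsystems, which would otherwise force an incompatible tensor product decomposition. The main technical obstacle is the combinatorial one in the middle paragraph: carefully setting up and solving the recursion for $\{c_w\}_{w\in W}$ on a general right-angled Coxeter system, and identifying the summability threshold as exactly $\rho$ rather than some artefact of the recursion, is where Garncarek's detailed analysis of the growth series is indispensable.
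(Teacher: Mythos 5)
A preliminary remark: the paper does not prove Theorem \ref{Thm=Factoriality} at all --- it is quoted verbatim as the main result of \cite{Garncarek} --- so there is no internal proof to compare against, and your attempt has to be measured against Garncarek's actual argument. Against that benchmark your outline points in the right direction: one does study $Z(\VNHecke)=\VNHecke\cap\VNHecke^r$ through symbols, the relevant one-dimensional character is indeed $T_s\mapsto q^{1/2}$ (the eigenvalues of $T_s$ under the normalisation $T_s^2=pT_s+1$ are $q^{1/2}$ and $-q^{-1/2}$), the associated spherical vector $\xi=\sum_{w\in W}q^{|w|/2}\delta_w$ lies in $\ell^2(W)$ precisely when $\sum_k|\{w:|w|=k\}|\,q^k<\infty$, and when it does its orthogonal projection commutes with both the left and right Hecke actions and hence is a minimal central projection cutting off a copy of $\C$.

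As a proof, however, the proposal has a genuine hole exactly where you yourself place the ``main technical obstacle'': the assertion that the system $T_s\xi=R_{T_s}\xi$ ($s\in S$, with $R_{T_s}$ the right-handed version of $T_s$) has essentially only the spherical solution is stated and then delegated back to \cite{Garncarek}. That uniqueness statement \emph{is} the theorem; without it nothing is established. Two further points need repair even at the level of the outline. First, the solution space cannot be one-dimensional ``up to scalars'': $\delta_e=\Omega$ is always a solution, since the identity is central, so what must be shown is that the space of admissible symbols is exactly $\C\delta_e\oplus\C\xi$ --- and it is in this uniqueness step, not merely in excluding degenerate groups, that irreducibility of $\Gamma$ is really used (for $W=W_1\times W_2$ the algebra tensor-decomposes and the centre can be as large as four-dimensional, so the stated dichotomy fails). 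Second, the boundary case $q=\rho$ is silently absorbed into the factor range; one must verify that the growth series of an infinite irreducible right-angled $W$ with $|S|\geq 3$ actually diverges at its radius of convergence, since otherwise $\xi\in\ell^2(W)$ at $q=\rho$ and the cut would fall on the wrong side. The concluding type-${\rm II}_1$ argument (faithful normal trace on an infinite-dimensional factor) is fine.
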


In particular $\VNHecke$ is diffuse if and  only if $q \in [\rho, 1]$. Further   structural results were obtained in \cite{Caspers},  \cite{CaspersConnes}, \cite{CaspersFima} where for example non-injectivity, approximation properties, absence of Cartan subalgebras, the Connes embedding property and the existence of graph product decompositions were established for $\VNHecke$.

\vspace{0.3cm}

In this paper we consider the special case $W=(\Z_2)^{*L}$, i.e.\ the case where $m$ is constantly equal infinity. We assume also that $L\geq 3$. Here the main result of \cite{Garncarek}, c.f. Theorem \ref{Thm=Factoriality}, says that $\VNHecke$ is a factor if and only if $q \in [\frac{1}{L-1},1]$, and results of  \cite{Dykema} together with a calculation in Section 5 of \cite{Garncarek} show that for that range of $q$ we have $\VNHecke \approx \VN(\mathbb{F}_{\frac{2Lq}{(1+q)^2}})$, where $\VN(\mathbb{F}_s)$ for $s\geq 1$ denote the interpolated free group factors of Dykema and Radulescu.

\begin{dfn}
An element $T \in \VNHecke$ is said to be radial if  for its symbol decomposition $T\Omega= \sum_{w \in W} c_w \delta_w$, where $c_w \in \C$ we have $c_w = c_v$ for every $v,w \in W$ with $l(v) = l(w)$. We say that $T$ has radius (at most) $n$ if the frequency support (i.e.\ the set of those $w \in W$ for which $c_w\neq 0$) of $T_w$ is contained in the ball $\{ w \in W: |w| \leq n \}$.
\end{dfn}

Define $h \in \Hecke\subset \VNHecke$ by the formula $h = \sum_{s \in S} T_s$ and put $\cB:= \{h\}''$.

\begin{prop}\label{Prop=Radial}
The von Neumann algebra $\cB$  coincides with the collection of all radial operators in  $\VNHecke$. In particular the set of all radial operators forms an algebra.
\end{prop}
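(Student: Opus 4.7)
The plan is to prove both containments by working at the level of symbols and exploiting the Hecke multiplication rules for $W=(\Z_2)^{*L}$, where a reduced word has the form $w=s_1 s_2\cdots s_n$ with $s_i\neq s_{i+1}$, and where $|sw|<|w|$ if and only if $s=s_1$. Put $R_n:=\sum_{|w|=n}T_w\in\Hecke$; its symbol is exactly $\sum_{|w|=n}\delta_w$, so the collection of radial operators is the $\tau$-closure of $\spa\{R_n:n\in\N_0\}$.

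For the inclusion $\cB\subseteq\{\text{radial operators}\}$, I would apply the Hecke relations directly to compute $hT_w=\sum_s T_sT_w = T_{s_2\cdots s_n}+\sum_{s\neq s_1}T_{sw}+pT_w$ for $|w|=n\geq 1$. Summing over all words of length $n$ and counting multiplicities (each word of length $n+1$ is produced exactly once by some $sw$; each word of length $n-1$ is produced $L-1$ times by some $s_2\cdots s_n$ when $n\geq 2$, and $L$ times when $n=1$) yields the three-term recurrence
\begin{equation}\label{Eq=Rec}
hR_n = R_{n+1}+(L-1)R_{n-1}+pR_n \qquad (n\geq 2),
\end{equation}
with boundary identities $hR_0=R_1$ and $hR_1=R_2+LR_0+pR_1$. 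Rearranging and inducting on $n$ shows each $R_n$ is a polynomial in $h$, hence $R_n\in\cB$ for all $n\in\N_0$.

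For the reverse inclusion, I would use the trace-preserving normal conditional expectation $E_\cB:\VNHecke\to\cB$, which exists because $\tau$ is a faithful normal trace; the standard characterisation is $E_\cB(T)\Omega=P_{\overline{\cB\Omega}}T\Omega$. If $T$ is radial, its symbol expands as $T\Omega=\sum_n c_n R_n\Omega$ in $\ell^2(W)$, using that the $R_n\Omega$ are mutually orthogonal with computable norms. Since each $R_n$ lies in $\cB$, this vector is in $\overline{\cB\Omega}^{\|\cdot\|_2}$, so $E_\cB(T)\Omega=T\Omega$; as $\Omega$ is separating for $\VNHecke$, this forces $T=E_\cB(T)\in\cB$. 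The ``in particular'' clause follows automatically since $\cB$ is already a (commutative) algebra. The only real obstacle is the careful bookkeeping in~\eqref{Eq=Rec}, especially the slightly exceptional $n=1$ case; everything else is formal manipulation with conditional expectations on a tracial von Neumann algebra.
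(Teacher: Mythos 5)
Your argument is correct and follows essentially the same route as the paper: your three-term recurrence $hR_n=R_{n+1}+(L-1)R_{n-1}+pR_n$ with the exceptional case $hR_1=R_2+LR_0+pR_1$ is exactly the paper's formula \eqref{recurrence}, and your reverse inclusion via the trace-preserving conditional expectation and the density of $\spa\{R_n\Omega\}$ in the radial subspace is the paper's argument with the projection $P_r$. One small bookkeeping remark: showing each $R_n$ is a polynomial in $h$ is really the ingredient for $\{\text{radial}\}\subseteq\cB$, while for $\cB\subseteq\{\text{radial}\}$ you should read the recurrence the other way (so that $\C[h]=\spa\{R_n:n\in\N_0\}$ consists of radial elements) and note that $\cB\Omega\subseteq\overline{\C[h]\Omega}$ lies in the closed radial subspace.
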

\begin{proof}
For each $n \in \N$ consider the radial operator $h_n:=\sum_{w \in W, \vert w \vert =n}  T_w \in \Hecke$ and put $h_0:=I$.

For each $n \in \N$, $n\geq 2$, we have
\begin{equation}\label{recurrence}
\begin{split}
h h_n
 = & \sum_{s \in S}   \sum_{\vert w \vert =n, \vert sw \vert > \vert w \vert}   T_s T_w   +  \sum_{s \in S}   \sum_{\vert w \vert =n, \vert sw \vert < \vert w \vert}   T_s T_w \\
 =& \sum_{s \in S}   \sum_{\vert w \vert =n , \vert sw \vert > \vert w \vert}  T_{sw}  +  \sum_{s \in S}   \sum_{\vert w \vert =n , \vert sw \vert < \vert w \vert}   T_{sw}  + \sum_{s \in S}   \sum_{\vert w \vert =n , \vert sw \vert < \vert w \vert}  p T_{w} \\
  =&    h_{n+1} +  (L-1)  h_{n-1}  + p h_{n}.
\end{split}
\end{equation}
We also have $h^2 = h_2 + p h + L h_0$.
This shows in particular that the algebra generated by $h$ consists of radial operators. 
Moreover viewing the above as a recurrence formula we see that each $h_n$ can be expressed as a polynomial in $h$ and $I$, so that the subspace $A$ generated by $\{h_n: n \in \N\}$ coincides with the unital $^*$-algebra generated by $h$.

Further define the radial subspace $\ell^2(W)_r:=\{(c_w)_{w \in W} \in \ell^2(W): \forall_{w,v \in W, |w|=|v|}\, c_v = c_w\}$ and denote the orthogonal projection from $\ell^2(W)$ onto $\ell^2(W)_r$ by $P_r$. It is easy to see that $A\Omega$ is norm dense in $\ell^2(W)_r$. Thus the unique trace-preserving conditional expectation $\mathbb{E}$ onto $A''\subset \VNHecke$ is given by the formula
\[ \mathbb{E}(T) \Omega = P_r T\Omega, \;\;\; T \in \VNHecke.\]
This shows that the set of radial operators in $\VNHecke$ coincides with $A''$
and passing now to ultraweak closures  we see that $h$ generates the von Neumann algebra of all radial operators.
\end{proof}

Note that the above fact is not true (even for $p=0$) for a general right-angled Coxeter group.
Also note that formulae as \eqref{recurrence} (and the subsequent line in the proof) play a very relevant role in our proof of singularity in Section \ref{Sect=Singular}.

The first main theorem of this paper is based on the idea of Pytlik for the radial algebra in $\VN(\mathbb{F}_n)$ (\cite{Pytlik}; see also \cite{SinclairSmith}). By $R_h \in \VNHecke^r$ we understand the operator on $\ell^2(W)$ given by the \emph{right} action of $\sum_{s\in S} T_s$.

\begin{lem}\label{Lem=Eta}
For every $v,w \in W$ with $\vert v \vert = \vert w \vert$ and for every $\epsilon > 0$ there exists a vector $\eta \in \ell^2(W)$ such that
\[
\Vert e_v - e_w - (h \eta - R_h\eta  ) \Vert_2 < \varepsilon.
\]
\end{lem}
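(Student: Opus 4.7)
The plan is to adapt Pytlik's construction (\cite{Pytlik}) for the radial algebra of the free group von Neumann algebra to the Hecke setting. I would start by observing that for every $u \in W$,
\[
(h - R_h)\delta_u = \sum_{s \in S}\bigl(\delta_{su} - \delta_{us}\bigr).
\]
This holds because the $p$-terms from the left and right Hecke multiplications cancel: each side contributes $p\delta_u$ exactly once, namely when $s$ matches the first (respectively last) letter of $u$ for $h$ (respectively $R_h$), and these cancel in the difference. Since $h$ and $R_h$ are both self-adjoint, $h - R_h$ is self-adjoint, so $\overline{\mathrm{Range}(h - R_h)} = \ker(h - R_h)^{\perp}$; equivalently, the lemma asserts that every $\delta_v - \delta_w$ with $\vert v \vert = \vert w \vert$ is orthogonal to $\ker(h - R_h) \cap \ell^2(W)$. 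This identity reduces the problem to a purely combinatorial question on the Cayley graph of $W = (\Z/2\Z)^{*L}$, which is the $L$-regular tree, with no further dependence on the deformation parameter $p$.

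For $L \geq 3$ one can connect any two reduced words of the same length by a chain of single-letter substitutions through reduced words, so by the triangle inequality and iteration it suffices to handle the case where $v$ and $w$ differ by exactly one letter. For this base case I would construct $\eta$ as an $\ell^2$-convergent series
\[
\eta = \sum_{k \geq 0} r^k \xi_k,
\]
where $r \in (0, 1/\sqrt{L-1})$ and $\xi_k$ is a finite linear combination of basis vectors $\delta_u$ at length approximately $\vert v \vert + 2k + 1$. The $\xi_k$ are defined inductively so that when $h - R_h$ is applied to the $N$-th partial sum one obtains
\[
(h - R_h)\Bigl(\sum_{k \leq N} r^k \xi_k\Bigr) = (\delta_v - \delta_w) + R_N,
\]
with the remainder $R_N$ concentrated at a single level $\sim \vert v \vert + 2N + 1$ and $\Vert R_N \Vert_2 \to 0$ as $N \to \infty$. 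The weight $r^k$ is balanced against the combinatorial sphere-volume growth $(L-1)^k$ so as to guarantee both convergence of $\eta$ in $\ell^2(W)$ and vanishing of the tail.

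The main technical obstacle lies in the inductive construction of the $\xi_k$. The difficulty is that $h - R_h$ applied to a vector supported at level $m$ produces contributions at both levels $m+1$ and $m-1$, so the \textbf{forward} propagation of the correction is contaminated by a \textbf{backward} term. The $\xi_{k+1}$ must be chosen so that its backward contribution exactly cancels the forward leftover of $\xi_k$, and this must happen simultaneously across all basis vectors present at that level. This combinatorial cancellation relies on the branching of the Cayley tree being sufficient when $L \geq 3$. Once the telescoping is in place, the $\ell^2$-estimate for the tail follows from a direct geometric-series bound using the chosen $r < 1/\sqrt{L-1}$.
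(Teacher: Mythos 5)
Your two structural observations are sound and match the paper's mechanism: the $p$-terms produced by $h$ and $R_h$ on any basis vector cancel, so $(h-R_h)\delta_u=\sum_{s\in S}(\delta_{su}-\delta_{us})$ and the problem is the same as for the undeformed group $(\Z/2\Z)^{*L}$; and a Pytlik-type telescoping series $\eta=\sum_k r^k\xi_k$ is indeed the right device. But the proof is not there: the step you yourself label ``the main technical obstacle'' --- the actual definition of the $\xi_k$ and the verification that the backward contribution of $\xi_{k+1}$ cancels the forward leftover of $\xi_k$ --- is precisely the content of the paper's argument (the sphere sums $\psi_k=\sum_{|x|=|y|=k,\,|xa|=|by|=k+1}e_{xazby}$ around a core word $azb$, and the identities $\phi_{1,k}=\frac{1}{L-1}\chi_{2,k+1}$, $\chi_{1,k}=\frac{1}{L-1}\phi_{2,k+1}$), and nothing in your outline supplies it. Moreover your choice of base case works against you: since $h-R_h$ only creates or deletes letters at the two ends of a word, the residue left at the bottom level of any such telescoping is a combination of ``rotation'' differences $e_{zb}-e_{az}$ (delete the first versus the last letter of a common word $azb$), not a one-letter-substitution difference; to produce the latter you must already combine several rotation differences, i.e.\ you are forced back to the reduction the paper makes directly (chains of rotated hybrids $w_k\cdots w_nv_1\cdots v_{k-1}$), while your substitution-graph connectivity claim for $L\geq 3$ is an additional unproved combinatorial assertion (the naive greedy recolouring argument needs $L\geq 4$).

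There is also a quantitative flaw in the scheme as stated. If you demand exact interior cancellation so that the error sits only in a single top-level remainder $R_N$, the cancellation condition forces the weights to be exactly $(L-1)^{-k}$; but the top level carries on the order of $(L-1)^{2N+1}$ unit contributions, so $\Vert R_N\Vert_2\asymp\sqrt{L-1}$ stays bounded away from zero, and with your weight $r<1/\sqrt{L-1}$ the interior terms do not telescope at all. The paper avoids this by damping with $\big(\tfrac{1-\delta}{L-1}\big)^k$, accepting a small leftover $\delta(\phi_{1,k}-\chi_{1,k})$ at \emph{every} level and using orthogonality of the levels to bound the total error by $O(\sqrt{\delta})$. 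Some such device (or an equivalent averaging trick) is needed; without it, and without an explicit construction of the $\xi_k$, the proposal remains an outline rather than a proof.
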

\begin{proof}
We first assume that $w = az$ and $v = zb$ for some word $z \in W$ with $\vert z \vert = \vert v \vert -1$ and some letters $a, b \in S$. In the proof $x$ and $y$ will always be words in $W$ and summations are always over $x$ and $y$. Put for $k \in \mathbb{N}$
\[
\psi_k = \sum_{ \vert x \vert = \vert y \vert = k, \vert x a \vert = \vert b y \vert = k+1}  e_{x  azb y} \in \ell^2(W),
\]
and define also $\psi_0=e_{azb}$. Let $\delta > 0$. As for each $k \in \N$ there are $L(L-1)^{k-1}$ reduced words in $W$ of length $k$,
\begin{equation}\label{Eqn=PsiKEstimate}
 \Vert \left( \frac{1-\delta}{L-1}  \right)^k  \psi_k \Vert_2^2 \leq    \left( \frac{1-\delta  }{L-1  }\right)^{2k} (L-1)^{2k-2}L^2  \leq 4 (1- \delta)^{2k}.
  \end{equation}
This means that we can define
\[
\eta_{\delta} = \sum_{k = 0}^{\infty}  \left( \frac{1-\delta}{L-1}  \right)^k \psi_k \in \ell^2(W).
\]
We claim that the vector $\eta_{\delta}$, for $\delta$ small enough (dependent on $\epsilon$) satisfies the condition of the lemma. To show that we need to analyse the actions of $h$ and $R_h$ on $\psi_k$.
For $k \geq 1$ we have (the bracket term included; the brackets are there in order to define further vectors in the remainder of the proof)
\begin{equation}\label{Eqn=LeftAction}
  \begin{split}
  h \psi_k = &
  \sum_{s\in S} \sum_{\vert x \vert = \vert y \vert = k, \vert x a \vert = \vert b y \vert = k+1, \vert s x \vert = k+1}  e_{  s x  azb y}\\
   &  +  \sum_{s\in S} \sum_{\vert x \vert = \vert y \vert = k, \vert x a \vert = \vert b y \vert = k+1, \vert s x \vert = k-1}     e_{   s x  azb y} \left( + p e_{   x  azb y} \right).
    \end{split}
  \end{equation}
  and similarly, for $k \geq 1$,
   \begin{equation}\label{Eqn=RightAction}
  \begin{split}
R_h  \psi_k = &
 \sum_{s\in S} \sum_{\vert x \vert = \vert y \vert = k, \vert x a \vert = \vert b y \vert = k+1, \vert y s   \vert = k+1}   e_{   x  azb y s}\\
   &  +  \sum_{s\in S} \sum_{\vert x \vert = \vert y \vert = k, \vert x a \vert = \vert b y \vert = k+1, \vert ys \vert = k-1}    e_{     x  azb y s} \left( + p e_{   x  azb y} \right).
    \end{split}
  \end{equation}
Finally
  \begin{equation}\label{Eqn=HonZeroAction}
h \psi_0 = e_{zb} + pe_{azb} + \sum_{s \in S\setminus\{a\}} e_{sazb}, \qquad   R_h \psi_0  = e_{az} + p e_{azb} + \sum_{s \in S\setminus\{b\}} e_{azbs}.
  \end{equation}
We now analyze the `commutators' $h \psi_k - R_h\psi_k $ and their sum.
Note first that for each $k\in \N_0$ the summand in $h \psi_k$   given by  $p e_{   x  azb y}$ also occurs in $R_h \psi_k $.

We define (compare to \eqref{Eqn=HonZeroAction}),
  \[
  \phi_{1,0} = \sum_{s \in S\setminus\{a\}} e_{sazb}, \:\: \phi_{2,0} = e_{zb},  \:\: \chi_{1,0} =  \sum_{s \in S\setminus\{b\}} e_{azbs}, \chi_{2,0} = e_{az}.
  \]
For $k \geq 1$ we set the following notation: let $\phi_{1,k}$ and $\phi_{2,k}$ be the two large sums on respectively the first and second line of \eqref{Eqn=LeftAction}, without the vectors between brackets.  Similarly we define $\chi_{1,k}$ and $\chi_{2,k}$ to be the two large sums on respectively the first and second line of \eqref{Eqn=RightAction}, without the vectors between brackets.


Then we have for all $k \in \N_0$
   \[
     \phi_{1,k}  =   \frac{1}{L-1} \chi_{2,k+1}, \qquad      \chi_{1,k}   =  \frac{1 }{L-1} \phi_{2,k+1},
   \]
so that
   \[
        \phi_{1,k}  -  \frac{1-\delta}{L-1} \chi_{2,k+1}          = \delta  \phi_{1,k}, \qquad
          \chi_{1,k}  -  \frac{1-\delta}{L-1} \phi_{2,k+1}          = \delta  \chi_{1,k}.
   \]
Thus a version of the telescopic argument yields the equality
   \[
   \begin{split}
   h \eta_\delta - R_h \eta_\delta = &
     \sum_{k=0}^\infty \left(   \frac{1-\delta}{L-1} \right)^k \left(   \phi_{1,k} + \phi_{2, k} - \chi_{1,k} - \chi_{2,k} \right) \\
     = &  e_{zb} - e_{az} + \delta \left(  \sum_{k=1}^\infty  \left( \frac{1-\delta}{L-1} \right)^k \left(   \phi_{1,k}   - \chi_{1,k}  \right)  \right).
   \end{split}
   \]
As $\delta \searrow 0$ this can be shown via a similar $\ell^2$-counting estimate as above to converge in norm to   $e_{zb} - e_{az} $. From this we conclude the claim.

  For general $v = v_1 \ldots v_n $ and $w = w_1 \ldots w_n$ with $v_n \not = w_1$ the proposition follows from a triangle inequality and an application of the proof above to each pair $w_k \ldots w_n v_1 \ldots v_{k-1}$ and $w_{k+1} \ldots w_n v_1 \ldots v_{k}$. In case $v_n = w_1$ one can apply the above to the pairs $v_k \ldots v_n b w_1 \ldots w_{k-2}$ and $v_{k+1} \ldots v_n b w_1 \ldots w_{k-1}$ for some letter $b \not = v_n$.
\end{proof}

We are ready to formulate the first main result in this section.

\begin{thm}\label{Thm=MASA}
The radial algebra $\cB$ is a masa in $\VNHecke$.
\end{thm}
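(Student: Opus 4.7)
The plan is to carry out the classical Pytlik-style duality argument, using Lemma \ref{Lem=Eta} as the only non-trivial input. Since $\cB$ is abelian and generated by the single self-adjoint element $h$, one has $\cB\subseteq \cB'\cap \VNHecke$, and to upgrade this to equality it suffices to show that every $x\in\VNHecke$ with $xh=hx$ lies in $\cB$. In view of Proposition \ref{Prop=Radial} and the description of the $\tau$-preserving conditional expectation $\exE\colon \VNHecke\to \cB$ as the compression by the radial projection $P_r$ at the level of symbols, this amounts to proving that $x\Omega\in \ell^2(W)_r$, i.e.\ that $\langle x\Omega, e_v-e_w\rangle=0$ for every pair $v,w\in W$ with $|v|=|w|$.

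Fix such $v,w$ and $\epsilon>0$, and let $\eta\in\ell^2(W)$ be the vector produced by Lemma \ref{Lem=Eta}, so that $\|e_v-e_w-(h\eta-R_h\eta)\|_2<\epsilon$. Two elementary facts drive the computation. First, both $h$ and $R_h$ are self-adjoint (since $T_s^*=T_s$ for $s\in S$). Second, $R_h\in\VNHecke^r\subseteq \VNHecke'$, so $R_h$ commutes with $x$; moreover the left and right actions of $\sum_{s\in S}T_s$ agree on the cyclic vector, giving
\[
R_h\Omega \;=\; \sum_{s\in S}\delta_s \;=\; h\Omega.
\]
Combining these with $xh=hx$ one obtains
\[
\langle x\Omega, h\eta\rangle=\langle hx\Omega,\eta\rangle=\langle xh\Omega,\eta\rangle=\langle xR_h\Omega,\eta\rangle=\langle R_hx\Omega,\eta\rangle=\langle x\Omega,R_h\eta\rangle,
\]
so $\langle x\Omega, h\eta-R_h\eta\rangle=0$. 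Since $\epsilon>0$ was arbitrary, $\langle x\Omega, e_v-e_w\rangle=0$, as required. Therefore $x\Omega\in\ell^2(W)_r$; using that $\Omega$ separates $\VNHecke$, $x=\exE(x)\in\cB$.

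The genuinely technical step -- the explicit construction of a commutator $h\eta-R_h\eta$ approximating $e_v-e_w$ via the telescoping geometric sum of the vectors $\psi_k$ -- has already been discharged in Lemma \ref{Lem=Eta}. Beyond that, the main theorem is essentially a one-line duality pairing, provided one correctly exploits the commutation of $R_h$ with $\VNHecke$ and the symmetric role of $h$ and $R_h$ on $\Omega$. The only delicate point is checking that the identification $R_h\Omega=h\Omega$ really holds in the Hecke setting (as opposed to only in the undeformed group case), but this is immediate from the Hecke multiplication rule since $|s|>|e|$ for each $s\in S$, so no $p$-correction appears.
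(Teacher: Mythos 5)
Your argument is correct and coincides with the paper's proof: both reduce maximality to showing $\langle x\Omega, e_v-e_w\rangle=0$ via Lemma \ref{Lem=Eta}, using self-adjointness of $h$ and $R_h$, the fact that $R_h$ lies in the commutant, and the identity $R_h\Omega=h\Omega$ (so that $\langle x\Omega, h\eta-R_h\eta\rangle=0$), then invoking Proposition \ref{Prop=Radial}. No substantive differences.
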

\begin{proof}
Suppose that $T \in \cB' \cap \VNHecke$ and write $T \Omega= \sum_{u \in W} c_u e_u$. Let $v,w \in W$ with $\vert v \vert = \vert w \vert$, let $\varepsilon > 0$ and let $\eta$ be as in Lemma \ref{Lem=Eta}. Note that as $T$ commutes with $h$ we have  $\langle T \Omega, h\eta - R_h\eta  \rangle = \langle (hT - R_h T ) \Omega, \eta \rangle = \langle T(h-R_h) \Omega, \eta \rangle= 0$.
Then we get
\[
\vert \langle T \Omega,   e_v - e_w \rangle \vert  \leq
\vert \langle T \Omega,  e_v - e_w + h \eta - R_h \eta  \rangle \vert \leq
 \varepsilon.
\]
As  $\varepsilon > 0$ is arbitrary, we see that $c_w = c_v$. Thus $T$ is radial, which is equivalent to the fact that $T \in \cB$ by Proposition \ref{Prop=Radial}.
\end{proof}

\begin{rmk}
The recurrence formula \eqref{recurrence} allows us to compute explicitly the distribution of $h$ with respect to the canonical trace. As the formula \eqref{recurrence} is valid only from $n=2$ we first define `new' $h_0$ as $\frac{L}{\wt{L}}$, where $\wt{L}:=L-1$, so that with respect to the new variables it holds for all $n\in \mathbb{N}$. For simplicity assume that $q\in [\frac{1}{\wt{L}}, 1]$, so that $\VNHecke$ is a (finite) factor. Then the distribution of $h$ is continuous (as $\cB$ is diffuse) and the main result of \cite{CohenTrenholme} implies that the corresponding density is given (up to a normalising factor) by
\[ \frac{\wt{L} \sqrt{4\wt{L} - (x-p)^2}}{\pi \left[-(x-p)^2 - p (2-L)(x-p) +p^2(L-1)+ L^2\right]} dx.\]
Note that for $p=0$ we obtain, as expected, the distribution of the radial element in the group $(\Z_2)^{*L}$ as computed in Theorem 4 of \cite{CohenTrenholme}.

\end{rmk}

\section{The Puk\'anszky invariant  and singularity of the Hecke MASA}\label{Sect=Singular}

The Puk\'anszky invariant $\mathcal{P}(\cA)$ of a masa $\cA \subseteq \cM$  is  determined by the von Neumann algebra generated by all $\cA$-$\cA$ bimodule homomorphisms of $L^2(\cM)$. We refer to \cite{SinclairSmith} for further discussion of $\mathcal{P}(\cA)$. In \cite{PopaScan} Popa showed that the Puk\'anszky  invariant can be used to prove singularity of certain masas (and indeed this was successfully applied by Radulescu \cite{Radulescu} in order to obtain singularity of the radial masa in $\VN(\mathbb{F}_n)$). We will use this strategy in this section, following very closely the proof of \cite{Radulescu}, to show that the Hecke radial masa discussed in Section \ref{Sect=Masa} is singular. In particular we determine its Puk\'anszky invariant.

\vspace{0.3cm}

We need some terminology. Let again $L\geq 3$, $W=(\Z_2)^{*L}$, $q \in [\frac{1}{L-1},1]$ and let  $\cB$ be the radial subalgebra of the factor $\VNHecke$ (shown to be a masa in Theorem \ref{Thm=MASA}).

\begin{dfn}
The Puk\'anszky invariant of $\cB \subseteq \VNHecke$ is defined as the type of the von Neumann algebra  $\langle h, R_h \rangle' \subseteq \cB(\ell^2(W))$, where $h$ and $R_h$ were defined in Section \ref{Sect=Masa}.
\end{dfn}

Next we introduce the necessary notation in order to determine the Puk\'anszky invariant of $\cB \subseteq \VNHecke$. We need to construct certain bases, which are inspired by Radulescu's bases in free group factors (see \cite{Radulescu}). For $l \in \mathbb{N}_0$ let $q_l: \Hecke \rightarrow \Hecke$ be the natural projection onto the span of $\{T_w, \vert w \vert = l\}$. Write $\Heckel = q_l(\Hecke)$.   As before set $h_l = \sum_{\vert w \vert = l} T_w$. We have for $m \geq 1$ (see \eqref{recurrence} and its subsequent line)
\begin{equation}\label{Eqn=ChiProduct}
h_1 h_m = h_m h_1 = h_{m+1} + p h_m + (L_m-1) h_{m-1},
\end{equation}
where $L_m = L$ if $m \geq 2$ and $L_m = L+1$ if $m = 1$.
Let
\[
S_l = {\rm span} \{  q_l( h_1 x), q_l( x h_1) \mid x \in q_{l-1}(\Hecke) \};
\]
in particular $S_1 = \C h_1$. Further for  $ l \in \mathbb{N}, \gamma \in \Heckel$, set
\[
\gamma_{m,n} = q_{m+n+l}(h_m \gamma h_n), \qquad m,n \in \mathbb{N}_0.
\]
We also set $\gamma_{m,n} = 0$ in case $m <0$ or $n <0$.
Finally for $l \in \mathbb{N}$ and $\gamma \in \Heckel \ominus S_l$ set
\[
X_\gamma = \overline{\spa}^{\Vert \: \Vert_2} \{  \gamma_{m,n} \mid m,n \in \mathbb{N}_0 \}\subset \ell^2(W).
\]
The following Lemma \ref{Lem=Blackbox} collects all computational results we need further.  As all the (rather easy) arguments are basically contained in \cite[Lemma 1]{Radulescu} we merely sketch the proof; all other proofs we give in this section will then be self-contained.
\begin{lem}\label{Lem=Blackbox}
 We have the following:
\begin{enumerate}
\item\label{Item=BB1} For $\gamma \in \Heckel, l \geq 1, m \geq 1, n \geq 0$, we have
\[
h_1 \gamma_{m,n} = \gamma_{m+1,n} + p \gamma_{m,n} + (L-1) \gamma_{m-1, n}.
\]
\item\label{Item=BB2} For $\gamma \in \Heckel \ominus S_l, l \geq 2, m \geq 0, n \geq 0$, we have
\[
h_1 \gamma_{m,n} = \gamma_{m+1,n} + p \gamma_{m,n} + (L-1) \gamma_{m-1, n}.
\]
(Note that only the case $m=0$ was not already covered by \eqref{Item=BB1}).
\item \label{Item=BB3} For $\beta \in \HeckeOne \ominus S_1,    n \geq 0$, we have
\[
h_1 \beta_{0,n} = \beta_{1,n} + p \beta_{0,n} -   \beta_{0, n-1}.
\]

\item\label{Item=BB4} \label{Item=BlackboxOne} For $\gamma \in \Heckel, l \geq 1$ we have,
\[
 \begin{split}
 & q_{l+m+n+1}( h_1 h_m \gamma h_n) = q_{l+n+m+1}( h_1  q_{l+m+n}(h_m \gamma h_n)), \qquad \:m,n \in \mathbb{N}, \\
 & q_{l-m-n-1}( h_1 h_m \gamma h_n) = q_{l-m-n-1}( h_1  q_{l-m-n}(h_m \gamma h_n)),  \qquad \: m,n \textrm{ such that } 0\leq m + n \leq l.
\end{split}
\]
\item\label{Item=BB5} For $\gamma \in \Heckel, l \geq 1$ we have $q_l(h_1 q_{l+1}(h_1 \gamma)) = (L-1) \gamma$.

\item\label{Item=BB6} For $\beta \in \HeckeOne \ominus S_1$ we have $q_n(h_1 q_{n+1}(\beta  h_n ) ) = - q_{n}(\beta  h_{n-1} )$.

\item\label{Item=BB7} For all $\gamma \in \Heckel \ominus S_l, l \geq 2, n\in \mathbb{N}, m \geq 1$ we have $q_{l}(  q_{m+n+l}(h_m \gamma h_n) h_{m+n} ) = 0$.

\end{enumerate}
\end{lem}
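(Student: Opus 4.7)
All seven items hinge on a careful bookkeeping of the Hecke multiplication rule in the free product $W = (\Z/2\Z)^{\star L}$: for any reduced word $w \neq e$, exactly one generator $s \in S$ (the leading letter of $w$) satisfies $|sw|<|w|$, giving $T_s T_w = T_{sw}+pT_w$, while the remaining $L-1$ generators give $|sw|>|w|$ and $T_s T_w = T_{sw}$; a symmetric statement holds on the right. Item (4) is then just a length-grading argument: writing $h_m \gamma h_n = \sum_k v_k$ with $v_k$ of length $k$, each product $h_1 v_k$ is supported in lengths $\{k-1, k, k+1\}$, so the top-length piece $l+m+n+1$ of $h_1 h_m \gamma h_n$ can only come from $h_1 v_{l+m+n}=h_1 \gamma_{m,n}$, and analogously at the bottom length $l-m-n-1$ only $h_1 v_{l-m-n}$ contributes.

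Items (5) and (6) are short explicit counts. For (5), one has $q_{l+1}(h_1 T_w)=\sum_{s \neq w_1} T_{sw}$ for $|w|=l$, and the length-$l$ part of $h_1 T_{sw}$ (for $s \neq w_1$) is simply $T_w$; summing over $s$ gives $(L-1)T_w$. For (6), the coefficient of $T_{w'}$ ($|w'|=n+1$) in $q_{n+1}(\beta h_n)$ is $b_{w'_1}$, so the coefficient of $T_u$ ($|u|=n$) in $q_n(h_1 q_{n+1}(\beta h_n))$ is $\sum_{s \neq u_1} b_s = -b_{u_1}$ using $\beta \perp h_1 \Leftrightarrow \sum_s b_s = 0$, matching $-q_n(\beta h_{n-1})$. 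To prove items (1)--(3), I would split $h_1 \gamma_{m,n}$ into three length pieces. The middle piece is manifestly $p \gamma_{m,n}$. The top piece equals $q_{l+m+n+1}(h_1 h_m \gamma h_n)$ by item (4), and using the already-known recurrence $h_1 h_m = h_{m+1} + p h_m + (L_m-1)h_{m-1}$ reduces to the top-length part of $h_{m+1}\gamma h_n$, namely $\gamma_{m+1,n}$. The bottom piece for item (1) with $m \geq 1$ comes from the identity $\gamma_{m,n}=q_{l+m+n}(h_1 \gamma_{m-1,n})$ (another application of item (4), now with $m-1$); item (5) applied to $\gamma_{m-1,n}$ immediately gives $q_{l+m+n-1}(h_1 \gamma_{m,n})=(L-1)\gamma_{m-1,n}$. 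For item (2) with $m=0$, the bottom piece's coefficient at $T_u$ (for $|u|=l+n-1$) works out to $\sum_{s \neq u_1} a_{s u_1 \cdots u_{l-1}}$, and this vanishes precisely because $\gamma \perp q_l(h_1 T_x)$ for every reduced $x$ of length $l-1$ --- an ingredient of the assumption $\gamma \perp S_l$. Item (3) is the $l=1$ boundary where the analogous coefficient becomes $\sum_{s \neq u_1} b_s=-b_{u_1}$ (using $\sum_s b_s=0$), and this coincides with $-\beta_{0,n-1}$ by item (6).

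The main obstacle is item (7), and I would resolve it by iteration. The anti-automorphism $\sigma$ of $\Hecke$ sending $T_w$ to $T_{w^{-1}}$ swaps left and right multiplication while fixing each $h_m$ and preserving both $\Heckel$ and $S_l$; applying $\sigma$ to items (1) and (2) yields the right-multiplication analogs $\gamma_{m,n} h_1 = \gamma_{m,n+1}+p\gamma_{m,n}+(L-1)\gamma_{m,n-1}$ for $n \geq 1$, and $\gamma_{m,0} h_1 = \gamma_{m,1}+p\gamma_{m,0}$ for $\gamma \perp S_l$. Since every $h_k$ is a polynomial in $h_1$ (by the recurrence \eqref{Eqn=ChiProduct}), iterating these two rules expresses $\gamma_{m,n} h_{m+n}$ as a linear combination of $\{\gamma_{m,j} : j \geq 0\}$; the crucial role of the boundary rule at $j=0$ is to prevent $\gamma_{m,-1}$ from ever appearing. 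Each $\gamma_{m,j}$ has length $l+m+j \geq l+1$ (since $m \geq 1$), so applying $q_l$ annihilates everything, yielding $q_l(\gamma_{m,n} h_{m+n})=0$. The delicate point is precisely that the orthogonality $\gamma \perp S_l$, with its contribution from both $q_l(h_1 x)$ and $q_l(x h_1)$, is needed to close the right-multiplication iteration --- this is where both halves of the definition of $S_l$ enter the argument.
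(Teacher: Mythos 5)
Your proof is correct and follows essentially the same route as the paper, which reduces all seven items to the recurrence \eqref{Eqn=ChiProduct} and length bookkeeping for the Hecke multiplication rule, deferring the details to \cite[Lemma 1]{Radulescu}. The only (harmless) difference is the direction of some logical dependencies: the paper derives \eqref{Item=BB5} and \eqref{Item=BB6} from \eqref{Item=BB1} and \eqref{Item=BB3}, whereas you prove \eqref{Item=BB5} and \eqref{Item=BB6} by direct counting and use them to supply the boundary terms of \eqref{Item=BB1} and \eqref{Item=BB3}; and for \eqref{Item=BB7} you make explicit the right-handed versions of \eqref{Item=BB1} and \eqref{Item=BB2} (via the anti-automorphism $T_w \mapsto T_{w^{-1}}$, which preserves $S_l$ precisely because $S_l$ contains both the $q_l(h_1 x)$ and the $q_l(x h_1)$ terms) that the paper's one-line remark leaves implicit.
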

\begin{proof}
The proofs of \eqref{Item=BB1} -- \eqref{Item=BB2} are easy consequences of \eqref{Eqn=ChiProduct}, see also \cite[Lemma 1 (a) and (b)]{Radulescu}. The proof of \eqref{Item=BB3} is essentially the same as \cite[Lemma 1.(c)]{Radulescu}. \eqref{Item=BB4} is a direct consequence of \eqref{Eqn=ChiProduct}. \eqref{Item=BB5} and \eqref{Item=BB6} follow from \eqref{Item=BB1} and \eqref{Item=BB3} respectively. \eqref{Item=BB7} follows from \eqref{Item=BB1} and \eqref{Item=BB2}.
\end{proof}

The following theorem gives the cornerstone in our computation of the Puk\'anszky invariant. The idea is based on first showing that for suitable $\beta$ and $\gamma$ the mapping $T: X_\beta \rightarrow X_\gamma$ defined by the formula \eqref{Eqn=TmnBasis} is bounded and invertible. 
Then one uses a basis transition to the respective basis $\{ h_m \beta h_n \}_{m,n \in \mathbb{N}} and \{h_m \gamma h_n\}_{ m,n \in \mathbb{N}}$ to show that $T$ is actually a $\cB-\cB$-bimodule map.

\begin{thm}\label{Thm=PukanszkyCore}
Let $l \in \N$, $l \geq2$, let $\beta \in \HeckeOne \ominus S_1$ and let $\gamma \in \Heckel(W) \ominus S_l$. Then the following hold:
\begin{enumerate}
\item \label{Item=PukanszkyOne} There exists a bounded invertible linear map $T: X_\beta \rightarrow X_\gamma$ determined by
\begin{equation}\label{Eqn=TmnBasis}
T: \beta_{m,n} \mapsto \gamma_{m,n} + \gamma_{m-1, n-1}, \qquad m,n \in \mathbb{N}_0.
\end{equation}
\item \label{Item=PukanszkyTwo} We have $X_\beta = \overline{\cB \beta \cB}^{\Vert \: \Vert_2}$ and $X_\gamma = \overline{\cB \gamma \cB}^{\Vert \: \Vert_2}$. Moreover the map $T$ defined by \eqref{Eqn=TmnBasis} agrees with the linear map
    \begin{equation}\label{Eqn=TchiBasis}
    T: h_m \beta h_n \mapsto h_m \gamma h_n, \qquad m,n \in \mathbb{N}_0.
    \end{equation}
\end{enumerate}
\end{thm}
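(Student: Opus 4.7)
The plan is to follow the strategy of \cite[Lemma 2]{Radulescu} from the free-group setting, using Lemma \ref{Lem=Blackbox} as the Hecke-level substitute for its calculational inputs. Roughly, I would build two almost-orthogonal frames $\{\beta_{m,n}\}$ and $\{\gamma_{m,n}\}$ in $X_\beta$ and $X_\gamma$ with explicit norms, compare them to see that $T$ defined by \eqref{Eqn=TmnBasis} is a bounded bijection, and then translate the definition via the triangular change of basis from $\{h_m \beta h_n\}$ to $\{\beta_{m,n}\}$ (and analogously for $\gamma$) to identify $T$ as the bimodule map determined by $\beta \mapsto \gamma$.

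For part (\ref{Item=PukanszkyOne}), I would first note that $\beta_{m,n}$ and $\gamma_{m,n}$ have symbols supported on words of fixed lengths $1+m+n$ and $l+m+n$ respectively, so pairs with distinct values of $m+n$ are automatically orthogonal. Inside each diagonal $m+n = d$, the orthogonality conditions $\beta \perp S_1$ and $\gamma \perp S_l$, together with Lemma \ref{Lem=Blackbox}(\ref{Item=BB5})--(\ref{Item=BB7}), should yield that different $(m,n)$ with $m+n=d$ are orthogonal (or at least have a Gram matrix bounded above and below independently of $d$). An induction on $m+n$ using the recursions (\ref{Item=BB1})--(\ref{Item=BB3}) then gives explicit closed-form expressions for $\|\beta_{m,n}\|_2^2$ and $\|\gamma_{m,n}\|_2^2$; once these are in hand, expanding $\|T\xi\|_2^2$ for $\xi = \sum c_{m,n}\beta_{m,n}$ shows that $\|T\xi\|_2$ is comparable to $\|\xi\|_2$ up to uniform constants, yielding bounded invertibility. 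The inverse can also be written down explicitly by inverting the shift $c_{m,n} \mapsto c_{m,n}+c_{m-1,n-1}$ on coefficient sequences.

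For part (\ref{Item=PukanszkyTwo}), density of $\spa\{h_m \beta h_n\}$ in $\cB\beta\cB$ follows because $\cB$ is generated by $h_1$ and the $\{h_m\}_{m\geq 0}$ are a polynomial basis of the unital $*$-algebra generated by $h_1$ (Proposition \ref{Prop=Radial}). Items (\ref{Item=BB4}) together with (\ref{Item=BB1})--(\ref{Item=BB3}) then let me write $h_m \beta h_n = \beta_{m,n} + (\text{lower-diagonal terms})$, a triangular transition with unit diagonal, so indeed $X_\beta = \overline{\cB \beta \cB}^{\|\cdot\|_2}$ and the analogue for $\gamma$ holds. To see that the map (\ref{Eqn=TmnBasis}) coincides with (\ref{Eqn=TchiBasis}), the cleanest route is to verify the $\cB$-bimodule property directly: I would check by induction on $m+n$ that $h_1 \cdot T\beta_{m,n} = T(h_1 \beta_{m,n})$ and the symmetric right-action identity. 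For $m\geq 1$ this is a term-by-term match using Lemma \ref{Lem=Blackbox}(\ref{Item=BB2}) on both sides; for $m=0$, the anomalous $-\beta_{0,n-1}$ appearing in (\ref{Item=BB3}) is precisely absorbed by the correction term $\gamma_{m-1,n-1}$ introduced in \eqref{Eqn=TmnBasis}, so both sides again agree. Combined with $T\beta_{0,0}=\gamma$, this gives $T(h_m \beta h_n) = h_m \gamma h_n$ after rewriting each $h_m$ as a polynomial in $h_1$.

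The main obstacle I anticipate is keeping sufficiently sharp control of the Gram matrices of $\{\beta_{m,n}\}$ and $\{\gamma_{m,n}\}$ within each diagonal $m+n=d$. In the free-group case ($p=0$) these frames turn out to be genuinely orthogonal after a clean rescaling, which is what makes Radulescu's argument run smoothly; for general $q \in [\tfrac{1}{L-1},1]$, the deformation parameter $p$ introduces cross-terms when applying Lemma \ref{Lem=Blackbox}(\ref{Item=BB1})--(\ref{Item=BB3}), and one must check that these cross-terms remain summable uniformly in $m,n$ — most likely by identifying the norm generating function with a Chebyshev-like polynomial recurrence coming from \eqref{Eqn=ChiProduct}.
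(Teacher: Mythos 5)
Your proposal is essentially correct and follows the same architecture as the paper's proof. For part (\ref{Item=PukanszkyOne}) the paper does exactly what you outline: it computes the Gram matrices explicitly (Lemma \ref{Lem=Orthogonality}), finding that the $\gamma_{m,n}$ are genuinely orthogonal with $\|\gamma_{m,n}\|_2^2=(L-1)^{m+n}\|\gamma\|_2^2$, while the $\beta_{m,n}$ within a diagonal $m+n=d$ have normalized Gram matrix equal to the Toeplitz matrix $\bigl((-\tfrac{1}{L-1})^{|n-n'|}\bigr)$; the quadratic-form estimate \eqref{Eqn=SquareEquivalence} then gives bounded invertibility of $\beta_{m,n}\mapsto\gamma_{m,n}$, and the correction term is handled by noting that the shift $\gamma_{m,n}\mapsto\gamma_{m-1,n-1}$ has norm at most $(L-1)^{-2}<1$, so one adds a Neumann-series-invertible perturbation. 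Your anticipated obstacle resolves itself more cleanly than you feared: the deformation parameter $p$ drops out of the inner-product recursions entirely (the inductive steps in Lemma \ref{Lem=Orthogonality} use only items (\ref{Item=BB4})--(\ref{Item=BB6}) of Lemma \ref{Lem=Blackbox}, in which $p$ does not appear), so the Gram matrices are literally those of the $p=0$ case; note though that even there the $\beta$-frame is not orthogonal within a diagonal, and Radulescu needs the same Toeplitz estimate. For part (\ref{Item=PukanszkyTwo}) you take a mildly different and arguably cleaner route: the paper computes the transition coefficients $b_{k,j}^{m,n}$, $c_{k,j}^{m,n}$ of $h_m\beta h_n$ and $h_m\gamma h_n$ in the two frames and proves the relation $c_{k,j}^{m,n}=b_{k,j}^{m,n}+b_{k+1,j+1}^{m,n}$ by a double induction (Lemma \ref{Lem=abcomputations}), whereas you verify the $h_1$-intertwining identity $h_1\cdot T\xi=T(h_1\xi)$ (and its right-handed analogue) term by term on the frame $\{\beta_{m,n}\}$ and then write $h_m$ as a polynomial in $h_1$. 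Your key observation — that the anomalous term $-\beta_{0,n-1}$ in Lemma \ref{Lem=Blackbox}(\ref{Item=BB3}) is exactly compensated by the correction $\gamma_{m-1,n-1}$ in \eqref{Eqn=TmnBasis}, using the convention $\gamma_{-1,\cdot}=0$ — checks out, and the intertwining extends from the spanning set to $X_\beta$ by boundedness of $T$ and of multiplication by $h_1$. Both arguments are valid; yours avoids tracking explicit coefficient recursions at the cost of having to verify the left and right intertwining separately.
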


The proof of Theorem \ref{Thm=PukanszkyCore} proceeds through a couple of lemmas, which we prove in two separate subsections.

\subsection{Proof of Theorem \ref{Thm=PukanszkyCore} part \eqref{Item=PukanszkyOne}} The first statement of Theorem \ref{Thm=PukanszkyCore}  is essentially a consequence of the following orthogonality property.

\begin{lem}\label{Lem=Orthogonality}
Let $l \in \N$, $l \geq2$ and let $\beta, \beta' \in \HeckeOne \ominus S_1$, $\gamma \in \Heckel \ominus S_l$, $\gamma' \in \Heckel, l \geq 2$. We have then for each $m,n,m',n' \in \N_0$
\begin{equation}\label{Eqn=BetaOrthogonality}
\langle \beta_{m,n}, \beta'_{m', n'} \rangle = \delta_{m +n, n'+m'}  (L-1)^{m+n - \vert n - n' \vert}  {(-1)^{\vert n - n' \vert} }  \langle \beta, \beta' \rangle;
\end{equation}
similarly,
\begin{equation}\label{Eqn=GammaOrthogonality}
\langle \gamma_{m,n}, \gamma'_{m', n'} \rangle = \delta_{m, m'} \delta_{n, n'} (L-1)^{m+n}   \langle \gamma, \gamma' \rangle.
\end{equation}
\end{lem}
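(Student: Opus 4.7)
My plan is to establish both orthogonality relations by induction on $m+n+m'+n'$, leveraging the recursions in Lemma~\ref{Lem=Blackbox} together with the self-adjointness of $h_1 \in \VNHecke$ and of its right-action counterpart $R_{h_1} \in \VNHecke^r$, which allows me to shift $h_1$ freely between the two slots of the inner product. A preliminary degree observation will play a decisive role: because $\gamma_{m,n}$ is pure-degree $m+n+l$ and $\beta_{m,n}$ is pure-degree $m+n+1$, and the $\{T_w\}_{w \in W}$ form an orthonormal basis of $\ell^2(W)$, both inner products vanish whenever $m+n \neq m'+n'$. This already explains the factor $\delta_{m+n,m'+n'}$ in \eqref{Eqn=BetaOrthogonality} and allows me to assume $m+n = m'+n'$ from now on.

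For the gamma case I first treat the subcase $m \geq 1$, $m' \geq 1$: I use Lemma~\ref{Lem=Blackbox}(2) to write $\gamma_{m,n} = h_1 \gamma_{m-1,n} - p\gamma_{m-1,n} - (L-1)\gamma_{m-2,n}$, transfer $h_1$ to the right slot, and expand $h_1 \gamma'_{m',n'}$ via Lemma~\ref{Lem=Blackbox}(1). Of the six resulting terms, all but one are killed by the degree observation (their total degrees differ by $1$ or $2$), producing the clean descent
\[
\langle \gamma_{m,n}, \gamma'_{m',n'}\rangle = (L-1)\langle \gamma_{m-1,n}, \gamma'_{m'-1,n'}\rangle.
\]
The remaining cases (where one of $m, m'$ is zero) are handled by switching to right-side recursions: the right-sided analogue of Lemma~\ref{Lem=Blackbox}(2) applies to $\gamma$, since $\gamma \in \Heckel \ominus S_l$ is equivalent to the two-sided condition $q_{l-1}(h_1\gamma) = q_{l-1}(\gamma h_1) = 0$ by the very definition of $S_l$, while the right-sided analogue of Lemma~\ref{Lem=Blackbox}(1) applies to $\gamma'$ with no $S_l^\perp$ assumption. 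Iterating these descents reduces everything either to the diagonal case $m=m'$, $n=n'$ (producing the factor $(L-1)^{m+n}\langle\gamma,\gamma'\rangle$) or to a boundary inner product $\langle \gamma_{k,0}, \gamma'_{0,k}\rangle$ with $k \geq 1$, which I show vanishes by one final right-side reduction in which every resulting term again has mismatched degree.

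For the beta case the argument is structurally identical, but Lemma~\ref{Lem=Blackbox}(3) must be used in place of Lemma~\ref{Lem=Blackbox}(2) whenever the left index of $\beta$ or $\beta'$ is zero. The extra boundary term $-\beta_{0,k-1}$ in the recursion $h_1\beta_{0,k} = \beta_{1,k} + p\beta_{0,k} - \beta_{0,k-1}$ replaces what would otherwise be a vanishing $-(L-1)\beta_{-1,k}$; consequently, every time the inductive descent passes through the boundary row $m=0$ (or, by symmetry, the boundary column $n=0$), the would-be factor $(L-1)$ in the recursion is replaced by $(-1)$. A careful count, based on the identity $\min(m,m') + \min(n,n') = m+n-|n-n'|$ (valid under the constraint $m+n=m'+n'$), will show that the descent passes through the boundary exactly $|n-n'|$ times, producing the advertised weight $(L-1)^{m+n-|n-n'|}(-1)^{|n-n'|}$.

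The main technical obstacle I anticipate is the precise bookkeeping of the boundary crossings in the $\beta$-case: confirming that each one contributes exactly one factor of $(-1)$ and omits exactly one factor of $(L-1)$ from the overall weight, and that the total number of crossings equals $|n-n'|$. A secondary subtlety is the asymmetric hypothesis on $\gamma$ and $\gamma'$ in \eqref{Eqn=GammaOrthogonality}: only $\gamma$ is required to lie in $\Heckel \ominus S_l$, not $\gamma'$. I will navigate this by always reducing on the $\gamma$-side first, where both the left- and right-sided type-(2) recursions are at my disposal, before invoking the boundary-free type-(1) recursions (left or right) on the $\gamma'$-side.
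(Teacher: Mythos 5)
Your proposal is correct and follows essentially the same route as the paper: both arguments peel off one $h_1$ at a time via the recursions of Lemma \ref{Lem=Blackbox}, transfer it across the pairing using self-adjointness, and discard all terms but one — the paper by applying the projection $q_l$ (items (4)--(7)), you by the equivalent observation that the $T_w$ of different word lengths are orthogonal — with the boundary rows $m=0$ or $n=0$ supplying the $(-1)$ factors in the $\beta$-case exactly as in your crossing count, which does land on the weight $(L-1)^{m+n-\vert n-n'\vert}(-1)^{\vert n-n'\vert}$. The one point to phrase carefully is the residual $\gamma$-case $\langle \gamma_{k,0},\gamma'_{0,k}\rangle$: there the $h_1$ must be extracted from the $\gamma'$ slot (boundary-free type-(1) right recursion, valid for any $\gamma'\in\Heckel$) and expanded against $\gamma$ (type-(2) right recursion, where $\gamma\in\Heckel\ominus S_l$ is used), not the reverse, since $h_1\gamma'_{0,k}$ has no clean three-term expansion for general $\gamma'$ — your sketch does this in substance, so it is a matter of wording rather than a gap.
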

\begin{proof}
Let us first prove \eqref{Eqn=GammaOrthogonality}.
Firstly, as $\gamma_{m,n}$ (resp. $\gamma'_{m', n'})$ is in the range of $q_{m+n+l}$ (resp. $q_{m'+n'+l}$) we must have $m+n = m'+n'$ or else both sides of \eqref{Eqn=GammaOrthogonality}  are non-zero.
We claim that
\begin{equation}\label{Eqn=Ind}
q_l(h_{m'} q_{m+n+l}(h_m \gamma h_n)  h_{n'}) = \delta_{m, m'} \delta_{n, n'} (L-1)^{m+n} \gamma.
\end{equation}
 For $k := m+n = 0$ this is obvious. We proceed by induction on $k$ and assume the assertion for $k-1$. For $k \geq 1$ one of $m$ and $n$ is non-zero and we may assume without loss of generality that $m \not = 0$ (the proof for $n$ can be done in the same way, or one considers the adjoint of \eqref{Eqn=Ind} which interchanges the roles of $m$ and $n$). If the left hand side of \eqref{Eqn=Ind} is non-zero, then we must have that $m'$ is non-zero, because otherwise this expression reads $q_l( q_{m+n+l}(h_m \gamma h_n)  h_{n+m})$ which is zero by Lemma \ref{Lem=Blackbox} \eqref{Item=BB7}.

 Using \eqref{Eqn=ChiProduct} together with the fact that $q_l(h_r q_{m+n+l}(x) h_n)=0$ for every $r < m$ and $x \in \Hecke$ and $q_{m+n+l}(h_s \gamma h_n) = 0$ for $s < m$, we get
\[
  q_l(h_{m'} q_{m+n+l}(h_m \gamma h_n)  h_{n'})
=  q_l(h_{m'-1} h_1 q_{m+n+l}( h_1 h_{m-1} \gamma h_n)  h_{n'}).
\]
Using then Lemma \ref{Lem=Blackbox} \eqref{Item=BB4} and \eqref{Item=BB5}  for the first  two of the following equalities and then the induction hypothesis
yields
\begin{equation}\label{Eqn=InductiveStep}
\begin{split}
 & q_l(h_{m'} q_{m+n+l}(h_m \gamma h_n)  h_{n'})
=  q_l(h_{m'-1} q_{m+n+l-1} (h_1 q_{m+n+l}( h_1 q_{m+n+l-1} (h_{m-1} \gamma h_n))  h_{n'})  \\
= & (L-1) q_l(h_{m'-1}  q_{m+n+l-1}( h_{m-1} \gamma h_n)  h_{n'})
=   (L-1) (L-1)^{m+n-1} \delta_{m, m'}  \delta_{n, n'}   \gamma.
\end{split}
\end{equation}
This completes the proof of \eqref{Eqn=Ind}.
Then  using the fact that $h_{m'}$ and $h_{n'}$ are self-adjoint we get
\begin{equation}\label{Eqn=EndConclusion}
\begin{split}
& \langle \gamma_{m,n}, \gamma'_{m', n'} \rangle
=  \langle q_{m+n+l}(h_m \gamma h_n), q_{m'+n'+l}( h_{m'}  \gamma' h_{n'} ) \rangle
=  \langle h_{m'} q_{m+n+l}(h_m \gamma h_n)  h_{n'} ,    \gamma'  \rangle \\
= & \langle q_l(h_{m'} q_{m+n+l}(h_m \gamma h_n)  h_{n'}) ,  \gamma'  \rangle
= (L-1)^{m+n} \delta_{m, m'} \delta_{n, n'} \langle \gamma,  \gamma'   \rangle.
\end{split}
\end{equation}

Next we sketch the proof of \eqref{Eqn=BetaOrthogonality}; it is largely the same as \eqref{Eqn=GammaOrthogonality}. The claim \eqref{Eqn=Ind} gets replaced by the equality
\begin{equation}\label{Eqn=Ind2}
q_l(h_{m'} q_{m+n+l}(h_m \beta h_n)  h_{n'}) =   (L-1)^{\vert m +n \vert - \vert n -n'\vert}   { (-1)^{\vert n - n' \vert} }  \delta_{m+n, m'+n'}  \beta.
\end{equation}
Again the proof proceeds by induction with respect to $k :=  m+n = m' +n'$. The case $k=0$ is obvious so assume $k \geq 1$.
First assume that both $m, m' \geq 1$. Similar to \eqref{Eqn=InductiveStep} and using the same results from Lemma \ref{Lem=Blackbox} we find that
\begin{equation}\label{Eqn=mnbiggerone}
\begin{split}
&  q_l( h_{m'} q_{m+n+l}(h_m \beta h_n)  h_{n'})
= q_l(  h_{m'-1} h_1 q_{m+n+l}( h_1 h_{m-1} \beta h_n)  h_{n'}) \\
= &   (L-1) q_l(  h_{m'-1} q_{m+n+l-1}( h_{m-1} \beta h_n)  h_{n' -1} )
=   (L-1)^{m+n - \vert n - n'\vert}  { (-1)^{\vert n - n' \vert} }  \delta_{m+n, m'+n'}   \langle \beta, \beta' \rangle.
\end{split}
\end{equation}
The proof of the equality \eqref{Eqn=mnbiggerone} (disregarding the intermediate steps) for the case $n, n' \geq 1$ proceeds in the same manner (or follows by taking adjoints of \eqref{Eqn=mnbiggerone} which swaps the roles of $m,m'$ and $n, n'$).
The only case that remains is then $m = 0$ and $n' = 0$ (again the case $m' = 0$ and $n = 0$ follows by taking adjoints, or by symmetry).  Then $n \geq 1, m'\geq 1$ and using Lemma \ref{Lem=Blackbox} \eqref{Item=BB6} for the second equality and then applying the induction hypothesis we obtain
\[
\begin{split}
&  q_1( h_{m'} q_{n+1}(\beta h_n) )
= q_1(  h_{m'-1} q_{n}( h_1 q_{n+1}(  \beta   h_{n-1} h_1)  )  ) \\
= &   { -}  q_1(  h_{m'-1} q_{n}(   \beta h_{n-1}  )    )
=   (L-1)^{m+n - \vert n - n'\vert} \delta_{m+n, m'+n'}  { (-1)^{\vert n - n' \vert} }    \langle \beta, \beta' \rangle.
\end{split}
\]
Then the lemma follows by replacing $\gamma$ by $\beta$ in \eqref{Eqn=EndConclusion}.

\end{proof}

Recall the elementary fact (see \cite[Lemma 5]{Radulescu} for a proof) that for a real number $a, \vert a \vert <1$ there exist constants $B_a>0$ and $C_a>0$ such that for any $ k \in \mathbb{N}, \lambda_1, \ldots, \lambda_k \in \mathbb{C}$ we have
\begin{equation}\label{Eqn=SquareEquivalence}
B_a \sum_{i=1}^k \vert \lambda_i \vert^2 \leq
\sum_{i=1}^k \lambda_i \overline{\lambda}_j a^{\vert i - j \vert} \leq
C_a \sum_{i=1}^k \vert \lambda_i \vert^2.
\end{equation}

\noindent {\it Proof of Theorem \ref{Thm=PukanszkyCore} (\ref{Item=PukanszkyOne}).} By Lemma \ref{Lem=Orthogonality} and \eqref{Eqn=SquareEquivalence} we see that the assignment $\beta_{m,n} \mapsto \gamma_{m,n}$ extends to a bounded invertible linear mapping $T_0: X_\beta \rightarrow X_\gamma$. By Lemma \ref{Lem=Orthogonality} we see that $S: X_\gamma \mapsto X_\gamma: \gamma_{m,n} \mapsto \gamma_{m-1,n-1}$ is bounded with norm $\Vert S \Vert \leq (L-1)^{-2}$. Therefore $\Id_{X_\gamma} + S$ is bounded and invertible. As the composition $(I + S) \circ T_0$ is bounded and invertible and agrees with \eqref{Eqn=TmnBasis} we are done.\qed

\subsection{Proof of Theorem \ref{Thm=PukanszkyCore} part \eqref{Item=PukanszkyTwo}} The following Lemma \ref{Lem=abcomputations} is the crucial part of the proof of Theorem \ref{Thm=PukanszkyCore} \eqref{Item=PukanszkyTwo}.

\begin{lem}\label{Lem=abcomputations}
Let  $l \geq 2$, $\beta \in \HeckeOne \ominus S_1$ and let $\gamma \in \Heckel \ominus S_l$. For every $m,n \in \mathbb{N}_0$ there exist certain constants $b_{k,j}^{m,n}, c_{k,j}^{m,n} \in \mathbb{R}$, $k=0,\ldots,m$, $j=0,\ldots n$ such that we have the expansions
\begin{equation}\label{Eqn=Decomposition}
h_m \beta h_n = \sum_{k \leq m, j \leq n} b_{k,j}^{m,n} \beta_{k,j}, \qquad
h_m \gamma h_n = \sum_{k \leq m, j \leq n} c_{k,j}^{m,n} \gamma_{k,j}.
\end{equation}
 Moreover, these constants satisfy the following equalities:
\begin{equation}\label{Eqn=CoeffDependence}
c_{k,j}^{m,n} = b_{k,j}^{m,n} + b_{k+1,j+1}^{m,n}, \qquad m,n \in \mathbb{N}, k=0,\ldots, m, j= 0, \ldots, n,
\end{equation}
where $b^{m,n}_{m+1, n+1} = 0$.
\end{lem}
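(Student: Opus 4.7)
The plan is to establish both parts of the lemma by a double induction on $(m,n)$, driven by the multiplicative identity \eqref{Eqn=ChiProduct} combined with the left- and right-action formulae collected in Lemma~\ref{Lem=Blackbox}.

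For the base of the induction, observe that $h_0 \beta h_0 = \beta = \beta_{0,0}$ and $h_0 \gamma h_0 = \gamma = \gamma_{0,0}$, so $b^{0,0}_{0,0} = c^{0,0}_{0,0} = 1$ and \eqref{Eqn=CoeffDependence} holds trivially (using the convention $b^{0,0}_{1,1} = 0$). For $(m,n) = (1,0)$, Lemma~\ref{Lem=Blackbox}\eqref{Item=BB3} at $n = 0$ (where the anomalous $-\beta_{0,-1}$ vanishes) and Lemma~\ref{Lem=Blackbox}\eqref{Item=BB2} at $m = n = 0$ give $h_1 \beta = \beta_{1,0} + p\,\beta_{0,0}$ and $h_1 \gamma = \gamma_{1,0} + p\,\gamma_{0,0}$, whose coefficients are manifestly linked by \eqref{Eqn=CoeffDependence}; the case $(0,1)$ is identical via the right-action version of the same two formulae.

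For the inductive step $m \to m+1$, I would rewrite \eqref{Eqn=ChiProduct} as
\[
h_{m+1}\beta h_n \;=\; h_1(h_m \beta h_n) - p\,h_m \beta h_n - (L_m - 1)\,h_{m-1}\beta h_n,
\]
substitute the inductive expansions for the three right-hand side terms, and then expand each $h_1 \beta_{k,j'}$ using Lemma~\ref{Lem=Blackbox}\eqref{Item=BB1} when $k \geq 1$ and Lemma~\ref{Lem=Blackbox}\eqref{Item=BB3} when $k = 0$. Collecting coefficients yields, for $k \geq 1$,
\[
b^{m+1,n}_{k,j} \;=\; b^{m,n}_{k-1,j} + (L-1)\,b^{m,n}_{k+1,j} - (L_m - 1)\,b^{m-1,n}_{k,j},
\]
and at the boundary
\[
b^{m+1,n}_{0,j} \;=\; (L-1)\,b^{m,n}_{1,j} - b^{m,n}_{0,j+1} - (L_m - 1)\,b^{m-1,n}_{0,j}.
\]
The analogous recursion for $c^{m+1,n}_{k,j}$ follows from the same calculation but uses only Lemma~\ref{Lem=Blackbox}\eqref{Item=BB1}--\eqref{Item=BB2}; it is uniform for all $k \geq 0$ and lacks the anomalous $-b^{m,n}_{0,j+1}$ term. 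Checking that the support condition $k \leq m+1,\ j \leq n$ is preserved under these recursions is immediate, which yields the expansion \eqref{Eqn=Decomposition} at $(m+1, n)$. The step $n \to n+1$ proceeds identically using the right-action versions of Lemma~\ref{Lem=Blackbox} (obtained by the obvious left-right symmetry of the construction).

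Verifying \eqref{Eqn=CoeffDependence} at $(m+1, n)$ is then direct substitution. For $k \geq 1$, summing the $b$-recurrence at $(k, j)$ and $(k+1, j+1)$ and applying the inductive identity to each of the three coefficient pairs that appear immediately gives $b^{m+1,n}_{k,j} + b^{m+1,n}_{k+1,j+1} = c^{m+1,n}_{k,j}$. The delicate point---which I expect to be the main obstacle---is the boundary case $k = 0$: there the anomalous $-b^{m,n}_{0,j+1}$ arising from Lemma~\ref{Lem=Blackbox}\eqref{Item=BB3} in $b^{m+1,n}_{0,j}$ must cancel exactly against the $+b^{m,n}_{0,j+1}$ produced by the $b^{m,n}_{k-1, j+1}$-term at $k = 1$ in $b^{m+1,n}_{1, j+1}$. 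This cancellation is precisely what makes \eqref{Eqn=CoeffDependence} the correct relation: the diagonal shift $(k,j) \mapsto (k+1, j+1)$ on the right-hand side is engineered to absorb the mismatch between the $\beta$-recurrence (with boundary coefficient $-1$) and the $\gamma$-recurrence (with the uniform coefficient $L-1$).
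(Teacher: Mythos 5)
Your proposal is correct and follows essentially the same route as the paper: expand $h_{m+1}\beta h_n$ and $h_{m+1}\gamma h_n$ via the three-term recurrence \eqref{Eqn=ChiProduct}, derive the coefficient recursions from Lemma~\ref{Lem=Blackbox}\eqref{Item=BB1}--\eqref{Item=BB3}, and verify \eqref{Eqn=CoeffDependence} inductively, with the crucial cancellation of the anomalous $-b^{m,n}_{0,j+1}$ term at $k=0$ against the $k=1$ contribution being exactly the point the paper's computation \eqref{Eqn=BTransition} relies on. The only cosmetic difference is that you run an explicit second induction on $n$ where the paper disposes of the base case $m=0$, $n$ arbitrary, by a direct appeal to Lemma~\ref{Lem=Blackbox}.
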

\begin{proof}
If $m = 0$ and $n \in \mathbb{N}$ arbitrary, then the existence of decompositions \eqref{Eqn=Decomposition} is a consequence of Lemma \ref{Lem=Blackbox}. The relation \eqref{Eqn=CoeffDependence} for $m = 0$ becomes $c_{k,j}^{0,n} = b_{k,j}^{0,n}$ which is a rather direct consequence of Lemma \ref{Lem=Blackbox} as well.

The proof proceeds by induction on $m$. Let $L_k = L$ if $k >1$ and let $L_1 = L+1$.  We have by \eqref{Eqn=ChiProduct} and then Lemma \ref{Lem=Blackbox} \eqref{Item=BB1} and \eqref{Item=BB3},
\begin{equation}\label{Eqn=BigComputation}
\begin{split}
& h_m \beta h_n
=  (h_1 - p) h_{m-1} \beta h_n  - (L_{m-1}-1) h_{m-2} \beta h_n  \\
= & (h_1 - p) \sum_{k = 0}^{m-1} \sum_{j = 0}^{n}  b_{k,j}^{m-1, n}  \beta_{k, j} - (L_{m-1}-1)  \sum_{k = 0}^{m-2} \sum_{j = 0}^{n}  b_{k,j}^{m-2, n}  \beta_{k,j} \\
= &  \sum_{k = 0}^{m-1} \sum_{j = 0}^{n}  b_{k,j}^{m-1, n}  (\beta_{k+1, j} + (L -1)  \beta_{k-1, j}) -
\sum_{j = 0}^n   b_{0,j}^{m-1, n}  \beta_{0, j-1}
 - (L_{m-1}-1)  \sum_{k = 0}^{m-2} \sum_{j = 0}^{n}  b_{k,j}^{m-2, n}  \beta_{k,j} \\
= &  \sum_{k = 0}^{m} \sum_{j = 0}^{n} ( b_{k-1,j}^{m-1, n} + (L -1)  b_{k+1,j}^{m-1, n})  \beta_{k, j} -
\sum_{j = 0}^{n-1}   b_{0,j+1}^{m-1, n}  \beta_{0, j}
 - (L_{m-1}-1)  \sum_{k = 0}^{m-2} \sum_{j = 0}^{n}  b_{k,j}^{m-2, n}  \beta_{k,j} \\
\end{split}
\end{equation}
This shows that for all $0 \leq k \leq m, 0 \leq j \leq n$ we obtain
\[
b_{k,j}^{m,n} = b_{k-1, j}^{m-1, n} + (L -1 ) b_{k+1, j}^{m-1, n} - (L_{m-1} -1) b_{k,j}^{m-2, n}
- \delta_{k, 0} b_{0, j+1}^{m-1, n}.
\]
Let $\delta_{k \geq 1}$ be 1 if $k \geq 1$ and 0 otherwise.
We get then
\[
b_{k,j}^{m,n} + b_{k+1,j+1}^{m,n} = \delta_{k \geq 1}   (b_{k-1, j}^{m-1, n} + b_{k, j+1}^{m-1, n})
 + (L -1 ) (b_{k+1, j}^{m-1, n}   +  b_{k+2, j+1}^{m, n+1}    ) - (L_{m-1} -1) (b_{k,j}^{m-2, n}  +  b_{k+1,j+1}^{m-2, n} ).
\]
So that by induction
\begin{equation}\label{Eqn=BTransition}
\begin{split}
b_{k,j}^{m,n} + b_{k+1,j+1}^{m,n} =  &\delta_{k \geq 1}   c_{k-1, j}^{m-1, n}
 + (L -1 ) c_{k+1, j}^{m-1, n}  - (L_{m-1} -1) c_{k,j}^{m-2, n} \\
 = &    c_{k-1, j}^{m-1, n}
 + (L -1 ) c_{k+1, j}^{m-1, n}  - (L_{m-1} -1) c_{k,j}^{m-2, n}.
 \end{split}
\end{equation}
Exactly as we computed \eqref{Eqn=BigComputation} (with the difference that Lemma \ref{Lem=Blackbox} \eqref{Item=BB3} is replaced by Lemma \ref{Lem=Blackbox} \eqref{Item=BB2}) we get the equalities
\[
 h_m \gamma h_n = \sum_{k = 0}^{m+1} \sum_{j = 0}^{n} ( c_{k-1,j}^{m-1, n} + (L -1)  c_{k+1,j}^{m-1, n})  \gamma_{k, j}
 - (L_{m-1}-1)  \sum_{k = 0}^{m-2} \sum_{j = 0}^{n}  c_{k,j}^{m-2, n}  \gamma_{k,j}.
\]
Thus
\[
c^{m,n}_{k,j} = c_{k-1, j}^{m-1, n} + (L -1 ) c_{k+1, j}^{m-1, n} - (L_m -1) c_{k,j}^{m-2, n}.
\]
Combining the above with \eqref{Eqn=BTransition} gives $c^{m,n}_{k,j} = b_{k,j}^{m,n} + b_{k+1,j+1}^{m,n}$ for all $0 \leq k \leq m, 0 \leq j \leq n$.
\end{proof}

\noindent {\it Proof of Theorem \ref{Thm=PukanszkyCore} \eqref{Item=PukanszkyTwo}.} Lemma \ref{Lem=abcomputations} shows that $\cB \gamma \cB \subseteq X_{\gamma}$ and $\cB \beta \cB \subseteq X_\beta$ and hence the inclusions hold  also for the $\Vert \: \Vert_2$-closures. For the converse inclusion proceed by induction: take $h_n \gamma h_m \in \cB \gamma \cB$ and assume that all vectors $h_r \beta h_s$ with $r < n, s \leq m$ are contained in $X_\gamma$ (if $n = 0$ then assume that $r \leq n, s < m$  and consider adjoints, or use a similar induction argument on  $m$). By \eqref{Eqn=ChiProduct} we have
\[
h_n \gamma h_m = (h_1 - p) h_{n-1} \gamma h_m - (L_n-1) h_{n-2} \gamma h_m \in h_1 X_{\gamma} + X_\gamma.
\]
Here again $L_n = L$ if $n \geq 2$ and $L_1 = L+1$.
So it suffices to show that $h_1 X_\gamma \subseteq X_\gamma$, but this is a consequence of Lemma \ref{Lem=Blackbox} \eqref{Item=BB2}. The proof for $\beta$ instead of $\gamma$ is the same but uses Lemma \ref{Lem=Blackbox} \eqref{Item=BB1} and \eqref{Item=BB3} for the latter argument.

The fact that \eqref{Eqn=TchiBasis} agrees with \eqref{Eqn=TmnBasis} is now a direct consequence of Lemma \ref{Lem=abcomputations}. Indeed,
\[
\begin{split}
&  T( h_m \beta h_n) = T\left(   \sum_{k \leq m, j \leq n} b_{k,j}^{m,n} \beta_{k,j}   \right)
=  \sum_{k \leq m, j \leq n} b_{k,j}^{m,n} (\gamma_{k,j} + \gamma_{k-1, j-1}) \\
=  & \sum_{k \leq m, j \leq n} (b_{k,j}^{m,n}  + b_{k+1,j+1}^{m,n}  ) \gamma_{k,j}
=   \sum_{k \leq m, j \leq n} c_{k,j}^{m,n}    \gamma_{k,j} = h_m \gamma h_n.
\end{split}
\]
\qed

\subsection{Consequences of Theorem \ref{Thm=PukanszkyCore}}

Let $\cB_r = \langle R_h \rangle''$ (note that as $\VNHecke$ is in the standard form on $\ell^2(W)$, it is also equal to $J \cB J$, where $J$ is the anti-linear Tomita-Takesaki modular conjugation $\delta_x \mapsto \delta_{x^{-1}}$).
For a vector $\gamma \in \bigcup_{l \in \N_0} \Heckel$ we let $p_\gamma$ be the central support in $(\cB \cup \cB_r)''$ of the vector state $\omega_{\gamma, \gamma}$. The operator $p_\gamma$ is then given by the projection onto the closure of $\cB \gamma \cB$.

\begin{lem}\label{Lem=OrthSupport}
If vectors $\xi, \xi' \in \cup_{l \geq 1} \Heckel \ominus S_l$ are orthogonal then $p_{\xi}$ and $p_{\xi'}$ are orthogonal projections.
\end{lem}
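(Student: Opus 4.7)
The plan is to reduce to showing the subspaces $X_\xi := \overline{\cB \xi \cB}^{\Vert \cdot \Vert_2}$ and $X_{\xi'}$ are orthogonal: since $p_\xi$ and $p_{\xi'}$ are the projections onto these bimodule closures (as noted above the lemma), $p_\xi p_{\xi'} = 0$ is equivalent to $X_\xi \perp X_{\xi'}$. By Theorem \ref{Thm=PukanszkyCore}\eqref{Item=PukanszkyTwo} these closures coincide with $\overline{\spa}\{\xi_{m,n} : m,n \in \N_0\}$ and $\overline{\spa}\{\xi'_{m',n'}\}$ respectively, so the task becomes that of showing $\langle \xi_{m,n}, \xi'_{m',n'}\rangle = 0$ for all $m,n,m',n' \in \N_0$.

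Write $\xi \in \Heckel \ominus S_l$ and $\xi' \in \Heckelprime \ominus S_{l'}$. The case $l = l'$ is immediate: Lemma \ref{Lem=Orthogonality} expresses the inner product as a scalar multiple of $\langle \xi, \xi'\rangle = 0$.

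Most of the work goes into the case $l \neq l'$, which is not covered by Lemma \ref{Lem=Orthogonality}. I may assume without loss of generality $l > l'$, so $l \geq 2$. Degree considerations force $\langle \xi_{m,n}, \xi'_{m',n'}\rangle = 0$ unless $m+n+l = m'+n'+l'$, in which case $m'+n' > m+n$; I then induct on $m+n$. For the base case $m = n = 0$, a short computation using \eqref{Eqn=ChiProduct} (pulling an $h_1$ out of $h_{m'}\xi' h_{n'}$) identifies $\xi'_{m',n'}$ as an element of $S_l$ whenever $m'+n' \geq 1$, so $\langle \xi, \xi'_{m',n'}\rangle = 0$ because $\xi \in \Heckel \ominus S_l$. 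For the inductive step ($m+n \geq 1$), combining self-adjointness of $h_1$ with Lemma \ref{Lem=Blackbox} and tracking which basis vectors share the homogeneous degree of the opposite side yields the recurrences
\[
\langle \xi_{m,n}, \xi'_{m',n'}\rangle = (L-1) \langle \xi_{m-1,n}, \xi'_{m'-1,n'}\rangle \qquad (m, m' \geq 1)
\]
and its $n$-analogue when $n, n' \geq 1$; most other summands coming from Lemma \ref{Lem=Blackbox} are killed by degree mismatch. Iterating reduces everything to the base case except for the residual configuration $m = 0$, $n \geq 1$, $m' \geq 1$, $n' = 0$ (or its transpose). There I expand $\xi'_{m',0} = h_1 \xi'_{m'-1,0}$ plus degree-mismatched remainders and transpose $h_1$ onto the $\xi$-side: Lemma \ref{Lem=Blackbox}\eqref{Item=BB2} then gives $h_1 \xi_{0,n} = \xi_{1,n} + p\xi_{0,n}$ (with no $(L-1)\xi_{-1,n}$ term precisely because $l \geq 2$), and neither summand has the homogeneous degree of $\xi'_{m'-1,0}$, so the inner product vanishes. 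The transposed configuration is handled analogously; when $l' = 1$ the expansion of $h_1 \xi'_{0,n'}$ via Lemma \ref{Lem=Blackbox}\eqref{Item=BB3} contributes an additional $-\xi'_{0,n'-1}$ term, producing a sign-alternating recurrence which still iterates down to the (vanishing) base case.

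The main obstacle is the case $l \neq l'$: the orthogonality relations of Lemma \ref{Lem=Orthogonality} do not apply, and one must balance the two sides via the recurrences of Lemma \ref{Lem=Blackbox}. The decisive technical input is the ``clean'' left-multiplication formula in Lemma \ref{Lem=Blackbox}\eqref{Item=BB2}, valid precisely when the starting vector lies in $\Heckel \ominus S_l$ with $l \geq 2$; this is exactly what forces the convention placing the larger of $l, l'$ on the $\xi$-side and what makes the final degree-mismatch step close.
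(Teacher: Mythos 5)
Your proof is correct, but in the main case $l \neq l'$ it takes a genuinely different (and considerably longer) route than the paper. The paper does not re-run the inductive scheme of Lemma \ref{Lem=Orthogonality} across levels. Instead, for $l' < l$ and matching total degree $m+n+l = r+s+l'$ it ``re-bases'' the vector $\xi'_{r,s}$ at level $l$: by iterating Lemma \ref{Lem=Blackbox}\eqref{Item=BB4} one has $\xi'_{r,s} = (\xi'_{a,b})_{r-a,s-b}$ for any $a+b = l-l'$, and your own base-case observation shows $\xi'_{a,b} \in S_l \subset \Heckel$. Since \eqref{Eqn=GammaOrthogonality} only requires the \emph{second} argument $\gamma'$ to lie in $\Heckel$ (not in $\Heckel \ominus S_l$), Lemma \ref{Lem=Orthogonality} applies verbatim with $\gamma = \xi$ and $\gamma' = \xi'_{a,b}$ and yields $\langle \xi_{m,n}, \xi'_{r,s}\rangle = \delta_{m,r-a}\,\delta_{n,s-b}\,(L-1)^{m+n}\langle \xi, \xi'_{a,b}\rangle = 0$ because $\xi \perp S_l$. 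This disposes of the cross-level case in two lines and in particular sidesteps your residual configurations and the separate $l'=1$ discussion via Lemma \ref{Lem=Blackbox}\eqref{Item=BB3}. Your version instead repeats the induction of Lemma \ref{Lem=Orthogonality} with the outer projection taken at level $l'$; the degree thresholds still close precisely because $m+n+l = m'+n'+l'$, so the recurrences you state do hold and the case analysis of what remains after iterating is exhaustive --- nothing is wrong, it is just more work. What the paper's trick buys is the realization that all the cross-level information is already encoded in the one-sided hypothesis of \eqref{Eqn=GammaOrthogonality}; what your argument buys is independence from that (slightly hidden) asymmetry in the statement of Lemma \ref{Lem=Orthogonality}.
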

\begin{proof}
Let $\xi \in \Heckel \ominus S_l$ and let $\xi' \in \Heckelprime \ominus S_{l'}$ with $l, l' \geq 1$. If $l = l'$ then the lemma follows directly from Lemma \ref{Lem=Orthogonality}. So assume that $l \not = l'$ and say that $l' \leq l$. It suffices to show that
\begin{equation}\label{Eqn=XiOrth}
\xi'_{r,s} \perp \xi_{m,n} \qquad \textrm{ for every } r,s, m,n \in \mathbb{N}_0.
\end{equation}
If $m+n+l \not = r+s+l'$ this is obvious as then the images of $q_{m+n+l}$ and $q_{r+s+l'}$ are mutually orthogonal. We may then assume $m+n+l  = r+s+l'$, so that $r+s \geq m+n$. If $m+n = 0$ then \eqref{Eqn=XiOrth} is obvious, as $\xi \perp S_l$ whereas $\xi'_{r,s} \in S_l$. But then note that $\xi'_{r,s} =  (\xi'_{a, b} )_{r-a, s-b}$ for any $a =0,\ldots,r,$ $b=0, \ldots s$ such that $l'+a+b = l$. As $\xi'_{a,b} \in S_l$ we see from Lemma \ref{Lem=Orthogonality}   that $(\xi'_{a, b} )_{r-a, s-b} \perp \xi_{m,n}$.
\end{proof}

We can now state and prove the main result of this section.

\begin{thm}\label{Thm=Pukanszky}
The von Neumann algebra $(\cB \cup \cB_r)' (1 - p_\Omega)$ is homogeneous of type I$_\infty$.
\end{thm}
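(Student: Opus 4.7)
The plan is to decompose $(1-p_\Omega)\ell^2(W)$ as an orthogonal direct sum of countably many $\cB-\cB_r$ bimodules, all of which are pairwise isomorphic via Theorem~\ref{Thm=PukanszkyCore}, and to argue that the resulting equivalence of cyclic projections in $(\cB\cup\cB_r)'$ forces the type~I$_\infty$ conclusion. This adapts Radulescu's strategy from \cite{Radulescu}.

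First, for each $l\geq 1$ fix an orthonormal basis $(\xi_\alpha^l)_\alpha$ of $\Heckel\ominus S_l$, and consider the subspaces $X_{\xi_\alpha^l}$. Lemma~\ref{Lem=OrthSupport} gives their mutual orthogonality, and a short computation (using $h_l\in S_l$ at each level) shows they are also orthogonal to $X_I=p_\Omega\ell^2(W)=\overline{\cB\Omega}^{\|\cdot\|_2}$. The equality
$$(1-p_\Omega)\ell^2(W)=\bigoplus_{l\geq 1,\,\alpha}X_{\xi_\alpha^l}$$
would be proved by induction on length: letting $V:=X_I+\sum_{l'\geq 1,\alpha}X_{\xi_\alpha^{l'}}$ and assuming $\C_q^{l'}[W]\Omega\subseteq V$ for all $l'<l$, the $\cB$-invariance of $V$ yields $h_1 x\Omega\in V$ for every $x\in\C_q^{l-1}[W]$, and subtracting the lower-level components (which lie in $V$ by induction) gives $q_l(h_1 x)\Omega\in V$. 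The analogous right-sided statement together with the trivial inclusion $(\Heckel\ominus S_l)\Omega\subseteq V$ then completes the step.

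Next, fix a unit $\beta_0\in\HeckeOne\ominus S_1$. Theorem~\ref{Thm=PukanszkyCore} provides bounded invertible $\cB-\cB_r$ bimodule isomorphisms $V_\alpha^l\colon X_{\beta_0}\to X_{\xi_\alpha^l}$ for every $l\geq 2$; for $l=1$ the analogous isomorphism $X_{\beta_0}\to X_{\xi_\alpha^1}$ is obtained by composing two applications of Theorem~\ref{Thm=PukanszkyCore} through an auxiliary vector of length $2$ (available because $L\geq 3$). Extending each $V_\alpha^l$ by zero to $\ell^2(W)$ and taking the partial-isometry part of its polar decomposition produces partial isometries in $(\cB\cup\cB_r)'$ implementing the Murray--von Neumann equivalence of the projections $p_{\xi_\alpha^l}$ onto $X_{\xi_\alpha^l}$. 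In particular all these projections are equivalent in $(\cB\cup\cB_r)'$.

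The subtle step, and the main obstacle, is to show that each $p_{\xi_\alpha^l}$ is an abelian projection in $(\cB\cup\cB_r)'$, equivalently that $\mathrm{End}_{\cB-\cB_r}(X_{\xi_\alpha^l})$ is commutative. Since $X_\xi=\overline{\cB\xi\cB}^{\|\cdot\|_2}$ is cyclic with cyclic vector $\xi$, every bimodule endomorphism $\Phi$ is determined by $\Phi(\xi)\in X_\xi$; the compatibility $h_m\Phi(\xi)h_n=\Phi(h_m\xi h_n)$, combined with the explicit inner-product formula of Lemma~\ref{Lem=Orthogonality} and the basis-transition expansions of Lemma~\ref{Lem=abcomputations}, then force $\Phi$ to be implemented by multiplication by an element of the abelian algebra $(\cB\cup\cB_r)''$. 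Once this is in place the conclusion is immediate: $(p_{\xi_\alpha^l})_{l\geq 1,\alpha}$ is a countably infinite family of mutually equivalent abelian projections in $(\cB\cup\cB_r)'$ summing to $1-p_\Omega$, their common central support is then forced to be $1-p_\Omega$, and the classical structure theorem for type~I von Neumann algebras via abelian projections yields that $(\cB\cup\cB_r)'(1-p_\Omega)$ is homogeneous of type~I$_\infty$.
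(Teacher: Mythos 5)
Your argument is correct and follows the same overall strategy as the paper: an orthogonal family of cyclic $\cB$-$\cB_r$ bimodules $X_{\xi}$ exhausting $(1-p_\Omega)\ell^2(W)$, made mutually equivalent in $(\cB\cup\cB_r)'$ via the bounded invertible intertwiners of Theorem \ref{Thm=PukanszkyCore}. Where you genuinely diverge is the final type determination. The paper never shows that the $p_{\xi}$ are abelian: it observes that $(\cB\cup\cB_r)'$ is of type I because its commutant $(\cB\cup\cB_r)''$ is abelian, writes it as a direct integral of type I factors, and notes that infinitely many mutually orthogonal projections, each with central support $1-p_\Omega$, force every fibre under $1-p_\Omega$ to be I$_\infty$. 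You instead prove that each $p_{\xi}$ is an abelian projection and invoke the structure theorem for homogeneous type I algebras. Your ``subtle step'' is in fact standard and needs none of the machinery of Lemmas \ref{Lem=Orthogonality} and \ref{Lem=abcomputations}: $p_{\xi}$ is a cyclic projection for the abelian algebra $(\cB\cup\cB_r)''$, so $(\cB\cup\cB_r)''p_{\xi}$ is abelian with cyclic vector $\xi$ on $X_{\xi}$, hence maximal abelian there, and therefore coincides with its commutant $p_{\xi}(\cB\cup\cB_r)'p_{\xi}$. With that shortcut your route is no longer than the paper's and avoids direct integrals altogether. Two further points in your favour: the completeness of the decomposition $\sum_i p_{\xi_i}=1-p_\Omega$, which the paper attributes to Lemma \ref{Lem=OrthSupport} although that lemma only yields orthogonality, is supplied by your induction on word length; and the $l=1$ versus $l=1$ case, not literally covered by Theorem \ref{Thm=PukanszkyCore}, is correctly handled by composing through a length-two vector (alternatively, \eqref{Eqn=BetaOrthogonality} together with Lemma \ref{Lem=abcomputations} shows directly that $\beta_{m,n}\mapsto\beta'_{m,n}$ is a bounded invertible bimodule map for unit vectors $\beta,\beta'\in\HeckeOne\ominus S_1$).
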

\begin{proof}
Because $(\cB \cup \cB_r)''$ is abelian the commutant  $(\cB \cup \cB_r)'$ is necessarily of type I; moreover $(\cB \cup \cB_r)'$ is a direct integral of type I factors (see \cite{DixmierBook} for direct integration). Let $(\xi_i)_{i \in \N}$ be an orthonormal basis in $\cup_{l \geq 1} \Heckel \ominus S_l$. By Lemma \ref{Lem=OrthSupport} the projections $(p_{\xi_i})_{i\in \N}$ are mutually orthogonal and by Theorem \ref{Thm=PukanszkyCore} they have the same central support in $(\cB \cup \cB_r)'$. As by Lemma \ref{Lem=OrthSupport} we have $\sum_{i\in \N} p_{\xi_i} = 1 - p_\Omega$ and $1- p_\Omega$ is central  in $(\cB \cup \cB_r)'$ (c.f. \cite[Lemma 3.1]{PopaScan}) we see that the central support of each $p_{\xi_i}$ in $(\cB \cup \cB_r)'$ is $1 - p_\Omega$. This shows that $(1- p_\Omega) (\cB \cup \cB_r)'$ is a direct integral of I$_\infty$-factors (which by definition means that it is homogeneous of type I$_\infty$).
\end{proof}

\begin{rmk}
Theorem \ref{Thm=Pukanszky} is phrased in the literature as follows: the Puk\'anszky invariant of $\cB$ is $\{ \infty \}$. This is because in the $\cB$-$\cB$-bimodule $(1-p_\Omega) L^2(\cM)$, the only factors occuring in the direct integral decomposition of the commutant of $\cB \cup \cB_r$ are infinite (and necessarily of type I).
\end{rmk}

\begin{cor}
The radial subalgebra
$\cB$ is a singular MASA of $\VNHecke$.
\end{cor}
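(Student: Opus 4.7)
The plan is to deduce singularity directly from the Puk\'anszky invariant computation of Theorem \ref{Thm=Pukanszky}, using the classical principle that a non-trivial normalizer of a MASA always produces a type~I$_1$ sub-bimodule of $L^2(\cM) \ominus L^2(\cA)$, so that $1 \in \mathcal{P}(\cA)$. This is essentially the contrapositive of \cite[Proposition~3.1]{PopaScan} (see also the exposition in \cite{SinclairSmith}).

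More concretely, first I would suppose for contradiction that there exists $u \in N_{\VNHecke}(\cB) \setminus \cB$. Replacing $u$ by $u - \mathbb{E}(u)$ gives an element of $\VNHecke$ whose image in $L^2(\VNHecke)$ is non-zero and orthogonal to $L^2(\cB)$, so the closed $\cB$--$\cB$ sub-bimodule
\[
K := \overline{\cB \, (u - \mathbb{E}(u)) \, \cB \, \Omega}^{\Vert \cdot \Vert_2} \subseteq (1-p_\Omega) L^2(\VNHecke)
\]
is non-zero. Because $u \cB u^* = \cB$, this bimodule is isomorphic to the twisted $\cB$--$\cB$ module $L^2(\cB)_\alpha$ with $\alpha = \mathrm{Ad}(u)|_\cB$; in particular the commutant of the $\cB$--$\cB$ action on $K$ is abelian (the standard bimodule over an abelian algebra, twisted on one side, is a type~I$_1$ bimodule). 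Consequently the support projection of $K$ in $(\cB \cup \cB_r)'(1-p_\Omega)$ is a non-zero type~I$_1$ central summand.

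This, however, contradicts Theorem \ref{Thm=Pukanszky}, which says that $(\cB \cup \cB_r)'(1-p_\Omega)$ is homogeneous of type~I$_\infty$ and hence admits no type~I$_1$ sub-projection. Therefore no such $u$ can exist and $N_{\VNHecke}(\cB) \subseteq \cB$, proving singularity. The only non-routine step is the bimodule identification $K \cong L^2(\cB)_\alpha$ and its consequence that the bimodule commutant on $K$ is abelian; both are standard facts about normalizers of MASAs, so I would simply cite them rather than reprove them.
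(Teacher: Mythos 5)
Your proof is correct and is in essence the paper's own: the paper's entire argument for this corollary is a citation of \cite[Remark 3.4]{PopaScan}, whose content is exactly the implication you unpack, namely that a normalizing unitary outside a masa forces a type I$_1$ summand into the Puk\'anszky decomposition, so that $\mathcal{P}(\cB)=\{\infty\}$ rules it out. One caution: the contradiction with Theorem \ref{Thm=Pukanszky} requires the projection onto $K$ to be \emph{central} in $(\cB\cup\cB_r)'(1-p_\Omega)$ and not merely abelian --- a homogeneous type I$_\infty$ algebra contains an abundance of abelian subprojections, so your closing phrase ``admits no type I$_1$ sub-projection'' is not literally true --- and centrality is precisely where the masa property of $\cB$ enters, via the characterization $uL^2(\cB)=\{\xi\in\ell^2(W) : ubu^*\,\xi = Jb^*J\,\xi \text{ for all } b\in\cB\}$, a set manifestly invariant under every operator commuting with $\cB$ and $\cB_r=J\cB J$.
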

\begin{proof}
This follows from Theorem \ref{Thm=Pukanszky} by \cite[Remark 3.4]{PopaScan}.
\end{proof}

\section{Generator MASAS in $q$-deformed Gaussian von Neumann algebras}\label{SectionqGaussian}
In this section we consider masas in a different deformation of the free group factors, i.e.\ so-called q-Gaussian algebras.

The starting point of the  construction of $q$-Gaussian algebras is a real Hilbert space $\cHR$. We complexify it, obtaining a complex Hilbert space $\cH$, and form an algebraic direct sum $\bigoplus_{n\geqslant 0} \cH^{\otimes n}$, where $\cH^{\otimes 0}=\C$. Following \cite{BozejkoSpeicher} (see that paper for all facts stated below without proofs), we will define an inner product on this space using the parameter $q \in (-1,1)$. For each $n\in \N$ we define an operator $P_q^n: \cH^{\otimes n} \to \cH^{\otimes n}$ by the formula $P_q^n(e_1 \otimes\dots \otimes e_n) = \sum_{\pi \in S_n} q^{i(\pi)} e_{\pi(1)} \otimes \dots \otimes e_{\pi(n)}$, where $e_1,\dots, e_n \in \cH$, $S_n$ is the permutation group on $n$ letters  and $i(\pi)$ denotes the number of inversions in the permutation $\pi$. These operators are strictly positive, so they define an inner product on $\bigoplus_{n\geqslant 0} \cH^{\otimes n}$ -- the Hilbert space that we get after completion is called the $q$-Fock space and is denoted by $\cFq(\cH)$. The direct sum decomposition of the $q$-Fock space allows us to define shift-like operators.
\begin{dfn}
Let $\xi \in \cH$. We define the \textbf{creation operator} $a_{q}^{\ast}(\xi): \cFq(\cH) \to \cFq(\cH)$ by $a_q^{\ast}(\xi) (e_1\otimes\dots\otimes e_n) = \xi \otimes e_1 \otimes\dots \otimes \dots e_n$. The \textbf{annihilation operator} $a_q(\xi):\cFq(\cH) \to \cFq(\cH)$ is defined as the adjoint of $a_q^{\ast}(\xi)$. Using the definition of the $q$-deformed inner product we can find the formula for $a_q(\xi)$: $a_q(\xi)(e_1\otimes\dots\otimes e_n) = \sum_{i=1}^{n} q^{i-1} \langle\xi, e_i\rangle e_1 \otimes\dots \widehat{e_{i}} \dots\otimes e_n$, where $\widehat{e_i}$ means that the factor $e_i$ is omitted. All the above operators extend to bounded operators on $\cFq(\cH)$.
\end{dfn}
Creation and annihilation operators will allow us to define $q$-Gaussian algebras.
\begin{dfn}
Let $\cHR$ be a real Hilbert space and let $\cH$ be its complexification. The von Neumann subalgebra of $\textrm{B}(\cFq(\cH))$ generated by the set $\{a_q^{\ast}(\xi) + a_q(\xi): \xi \in \cHR\}$ is called the \textbf{$q$-Gaussian algebra} associated with $\cHR$ and is denoted by $\Gamma_q(\cHR)$.

The vector $\Omega=1 \in \C \subset \cH^{\otimes 0} \subset \cFq(\cH)$ is called the \textbf{vacuum vector}. It is a cyclic and separating vector for $\Gamma_q(\cHR)$ and the associated vector state $\omega(x):= \langle \Omega, x\Omega \rangle$ is a normal faithful trace on $\Gamma_q(\cHR)$.
\end{dfn}
\begin{rmk}
For $q=0$ the assignment $\cHR \mapsto \Gamma_q(\cHR)$ is precisely Voiculescu's free Gaussian functor. In particular $\Gamma_{0}(\cHR) \simeq \textrm{L}(\mathbb{F}_{\textrm{dim}(\cHR)})$.
\end{rmk}
We will study problems pertaining to conjugacy of masas in the $q$-Gaussian algebras. It is a nice feature of these objects that the orthogonal operators on $\cHR$ give rise to automorphisms of $\Gamma_q(\cHR)$. To introduce these automorphisms, we need to present the \textbf{first quantisation}.
\begin{dfn}
Let $T: \cH \to \cH$ be a contraction. The assignment $ \bigoplus_{k\geqslant 0} \cH^{\otimes k} \ni e_1 \otimes\dots\otimes e_n \mapsto Te_1 \otimes \dots \otimes Te_n \in \bigoplus_{k\geqslant 0} \cH^{\otimes k}$ extends to a contraction $\cFq(T): \cFq(\cH) \to \cFq(\cH)$ and is called the first quantisation of $T$.
\end{dfn}
\begin{rmk}
If $U: \cH \to \cH$ is a unitary then $\cFq(U)$ is also a unitary.
\end{rmk}
To work with $\Gamma_q(\cHR)$ we need a convenient notation for its generators. For any $\xi \in \cHR$ we put $W(\xi):=a_q^{\ast}(\xi) + a_q(\xi)$. If $\eta=\xi_1 + i \xi_2 \in \cH$ then we denote $W(\eta)=W(\xi_1)+i W(\xi_2)$, therefore $W(\eta)$ is complex-linear in $\eta$. Recall that the vacuum vector $\Omega$ is cyclic and separating. One can check that for any vectors $\eta_1,\dots,\eta_n \in \cH$ we have $\eta_1\otimes\dots\otimes \eta_n \in \Gamma_q(\cHR)\Omega$; the unique operator $W(\eta_1\otimes\dots\otimes \eta_n) \in \Gamma_q(\cHR)$ such that $W(\eta_1\otimes\dots\otimes \eta_n)\Omega = \eta_1\otimes\dots\otimes \eta_n$ is called a \textbf{Wick word}. The span of all such operators associated with finite simple tensors forms a strongly dense $\ast$-subalgebra of $\Gamma_q(\cHR)$, which we call the \textbf{algebra of Wick words}. Finally note that similarly to Section 2 we can also consider the `right' version of $\Gamma_q(\cHR)$, generated by the combinations of right creation and annihilation operators, in particular containing the right Wick words, to be denoted $W_r(\xi)$. We are ready to introduce the \textbf{second quantisation}.
\begin{dfn}
Let $\cHR$ be a real Hilbert space and let $\cH$ be its complexification. Suppose that $T:\cH \to \cH$ is a contraction such that $T(\cHR) \subset \cHR$. Then the assignment $\Gamma_q(\cHR) \ni W(\eta_1\otimes\dots\otimes \eta_n) \mapsto W(T\eta_1\otimes\dots\otimes T\eta_n) \in \Gamma_q(\cHR)$, where $\eta_1,\dots,\eta_n \in \cH$, may be extended to a normal, unital, completely positive map on $\Gamma_q(\cHR)$, denoted by $\Gamma_q(T)$.
\end{dfn}
\begin{rmk}
Note that the condition $T(\cHR) \subset \cHR$ is essential, otherwise $\Gamma_q(T)$ would not even preserve the adjoint, let alone be completely positive.
\end{rmk}
We will only deal with automorphisms and, in this construction, they come from orthogonal operators on $\cHR$. If $U: \cHR \to \cHR$ is orthogonal then $\Gamma_q(U)(x) = \cFq(U) x \cFq(U)^{\ast}$, where we still denote by $U$ its canonical unitary extension to $\cH$. It is easy to check that $\Gamma_q(U) W(\xi) = W(U\xi)$. One can verify that none of these automorphisms is inner, besides the identity.

To find candidates for masas, we draw inspiration from the case $q=0$, in which the most basic masas are the so-called generator masas. In our picture they correspond to subalgebras generated by a single element $W(\xi)$, where $\xi \in \cHR$. In \cite{ricard05qfactor} Ricard proved they are also masas in the case of $q$-Gaussian algebras. As an application, he established factoriality of all $q$-Gaussian algebras $\Gamma_q(\cHR)$ with $\textrm{dim}(\cHR) \geqslant 2$. Recently these generator masas were also shown to be singular (\cite{Wen}) and maximally injective \cite{qMaxInjective}.

Using the automorphisms produced by the second quantisation procedure, we can easily show that all these masas are conjugate by an outer automorphism. Indeed, consider masas generated by $W(\xi)$ and $W(\eta)$, where $\xi,\eta \in \cHR$. By rescaling, we may assume that $\|\xi\|=\|\eta\|=1$. Therefore one can find an orthogonal operator $U$ such that $U\xi=\eta$; then $\Gamma_q(U) ((W(\xi))'') = (W(\eta))''$. Our aim now is to show that they  are never conjugate by a unitary.
\subsection{Case of orthogonal vectors}
We first want to deal with the case when $\mathsf{A}:= (W(e_{1}))''$ and $\mathsf{B}:=(W(e_{2}))''$ are masas in $\mathsf{M}:=\Gamma_{q}(\cHR)$ coming from two orthogonal vectors. In the case $q=0$ these masas correspond to two different generator masas of the free group factor. One can prove that these are not unitarily conjugate using Popa's notion of orthogonal pairs of subalgebras (cf. \cite[Corollary 4.3]{Popa}). We will use another technique due to Popa giving a criterion for embedding $\mathsf{A}$ into $\mathsf{B}$ inside $\mathsf{M}$ (in a certain technical sense). We will actually only state the part of the theorem that is useful for us; for the full statement consult \cite[Theorem 2.1 and Corollary 2.3]{PopaIntertwining}.
\begin{prop}[Popa]
Let $\mathsf{A}$ and $\mathsf{B}$ be von Neumann subalgebras of a finite von Neumann algebra $(\mathsf{M},\tau)$. Suppose that there exists a sequence of unitaries $(u_{k})_{k\in \N} \subset \mathcal{U}(\mathsf{A})$ such that for any $x,y \in \mathsf{M}$ we have $\lim_{k\to\infty} \|\mathbb{E}_{\mathsf{B}} (xu_{k} y)\|_{2}=0$, where $\mathbb{E}_{\mathsf{B}}$ is the unique $\tau$-preserving conditional expectation from $\mathsf{M}$ onto $\mathsf{B}$. Then there does not exist a unitary $u \in \mathsf{M}$ such that $u\mathsf{A} u^{\ast} = \mathsf{B}$.
\end{prop}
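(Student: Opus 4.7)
The plan is to argue by contradiction, exploiting the fact that the hypothesis is tailor-made to be violated by a genuine unitary conjugacy. Suppose that some $u \in \mathcal{U}(\mathsf{M})$ implements $u\mathsf{A}u^{\ast} = \mathsf{B}$. Then conjugation by $u$ sends $\mathcal{U}(\mathsf{A})$ into $\mathcal{U}(\mathsf{B})$, so for every $k\in\mathbb{N}$ the element $u u_k u^{\ast}$ lies in $\mathsf{B}$ and is unitary there. Consequently $\mathbb{E}_{\mathsf{B}}(u u_k u^{\ast}) = u u_k u^{\ast}$, and using the fact that $\tau$ is a trace and $u_k$ is unitary,
\[
\|\mathbb{E}_{\mathsf{B}}(u u_k u^{\ast})\|_2^2 = \tau(u u_k^{\ast} u^{\ast} u u_k u^{\ast}) = \tau(u_k^{\ast} u_k) = 1
\]
for every $k$.

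On the other hand, taking $x := u$ and $y := u^{\ast}$ in the hypothesis of the proposition yields
\[
\lim_{k \to \infty} \|\mathbb{E}_{\mathsf{B}}(u u_k u^{\ast})\|_2 = 0,
\]
which directly contradicts the constant value $1$ computed above. This contradiction rules out the existence of the conjugating unitary $u$ and finishes the argument.

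There is no real obstacle here: the implication we need is the easy half of Popa's intertwining-by-bimodules theorem, whose entire content in this direction is the simple observation that conjugation by a unitary preserves the $2$-norm and that the conditional expectation is the identity on its range. The substantive mathematical work of the section comes in the next step, namely producing the sequence $(u_k)_{k \in \mathbb{N}} \subset \mathcal{U}(\mathsf{A})$ such that $\|\mathbb{E}_{\mathsf{B}}(xu_k y)\|_2 \to 0$ for all $x,y$; this is where the specific structure of Wick words and of the $q$-deformed inner product on $\cFq(\cH)$ will have to be brought to bear, and where one should expect the main technical difficulty of Section~\ref{SectionqGaussian} to lie.
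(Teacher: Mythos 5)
Your argument is correct and complete: taking $x=u$, $y=u^{\ast}$ in the hypothesis and noting that $\mathbb{E}_{\mathsf{B}}$ fixes the unitary $uu_ku^{\ast}\in\mathsf{B}$, which has $\|\cdot\|_2$-norm $1$, gives the desired contradiction. The paper itself offers no proof (it simply cites Popa), and what you wrote is exactly the standard argument for this easy direction of the intertwining criterion.
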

\begin{rmk}
Note that it suffices to check that $\lim_{k\to\infty} \|\mathbb{E}_{\mathsf{B}} (xu_{k} y)\|_{2}=0$ only for $x,y \in \widetilde{\mathsf{M}}$, where $\widetilde{\mathsf{M}}$ is a strongly dense $\ast$-subalgebra. It follows from Kaplansky's density theorem, because we can approximate in the strong operator topology (in particular in $L^2$) and control the norm of the approximants at the same time.
\end{rmk}
\begin{prop}\label{Prop:orthogonal}
Let $e_1, e_2 \in \cHR$, $\|e_1\|=\|e_2\|=1$, $e_1 \perp e_2$. Set $\mathsf{A}=(W(e_1))''$, $\mathsf{B}=(W(e_2))''$, and $\mathsf{M}=\Gamma_q(\cHR)$. There exists a sequence of unitaries $(u_{k})_{k \in \N} \subset \mathcal{U}(\mathsf{A})$ such that we have $\lim_{k\to\infty} \|\mathbb{E}_{\mathsf{B}}(xu_k y)\|_{2}=0$ for all $x,y \in \widetilde{\mathsf{M}}$, where $\widetilde{\mathsf{M}}$ is the algebra of Wick words.
\end{prop}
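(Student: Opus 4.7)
The plan is to apply the Popa intertwining criterion stated just before Proposition~\ref{Prop:orthogonal}. Since $\mathsf{A} \cong L^{\infty}(\sigma(W(e_1)), \mu_q)$ is a diffuse abelian von Neumann algebra, with $\mu_q$ the continuous $q$-Gaussian spectral distribution of $W(e_1)$, we may take any sequence $u_k \in \mathcal{U}(\mathsf{A})$ converging weakly to $0$; a natural concrete choice is $u_k := \exp(ik W(e_1))$. By the remark preceding the proposition, it suffices to verify $\|\mathbb{E}_{\mathsf{B}}(xu_k y)\|_2 \to 0$ for Wick words $x = W(F)$, $y = W(G)$ with $F = f_1 \otimes \cdots \otimes f_n \in \cH^{\otimes n}$ and $G = g_1 \otimes \cdots \otimes g_m \in \cH^{\otimes m}$ simple tensors.

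Write $\cK_2 := \overline{\mathsf{B}\Omega} = \overline{\spa}\{e_2^{\otimes j} : j \geq 0\}$ so that $\mathbb{E}_{\mathsf{B}}(T)\Omega = P_{\cK_2}(T\Omega)$. Since $\Omega$ is cyclic and separating, there exists a right-Wick operator $y' \in \mathsf{M}'$ with $y'\Omega = y\Omega = G$ (concretely $y' = Jy^\ast J$); as $y'$ commutes with $u_k \in \mathsf{A}$, we obtain
\[
xu_k G \;=\; x\,y'(u_k \Omega) \;=\; \sum_{j \geq 0} \frac{\alpha_j^{(k)}}{\sqrt{[j]_q!}}\, xy'(e_1^{\otimes j}),
\]
where $\alpha_j^{(k)} := \langle u_k\Omega, e_1^{\otimes j}/\sqrt{[j]_q!}\rangle$ and $\{e_1^{\otimes j}/\sqrt{[j]_q!}\}_{j\geq 0}$ is an orthonormal basis of $\overline{\mathsf{A}\Omega}$ for the $q$-Fock inner product. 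Weak convergence $u_k \to 0$ forces $\alpha_j^{(k)} \to 0$ for each fixed $j$.

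The crucial point is that $P_{\cK_2}(xy'(e_1^{\otimes j})) = 0$ for $j > m+n$. Indeed, by the creation-annihilation expansion of $q$-Wick operators, $y'$ can annihilate at most $m$ copies of $e_1$ in $e_1^{\otimes j}$ (each paired with some $g_i$ through $\langle g_i, e_1 \rangle$), and then $x = W(F)$ can annihilate at most $n$ further copies (through $\langle f_i, e_1\rangle$); every tensor appearing in $xy'(e_1^{\otimes j})$ thus retains at least $j - m - n \geq 1$ uncontracted factors equal to $e_1$. Since the $q$-inner product
\[
\langle \zeta_1 \otimes \cdots \otimes \zeta_p, e_2^{\otimes p}\rangle_{\cFq} \;=\; \sum_{\pi \in S_p} q^{i(\pi)} \prod_{i=1}^{p} \langle \zeta_i, e_2\rangle
\]
vanishes termwise whenever some $\zeta_i \perp e_2$, and $\langle e_1, e_2\rangle = 0$, the projection $P_{\cK_2}$ kills every such tensor. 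Consequently $P_{\cK_2}(xu_k G)$ collapses to a finite sum over $j \leq m+n$, each term of which has coefficient $\alpha_j^{(k)} \to 0$, yielding $\|\mathbb{E}_{\mathsf{B}}(xu_k y)\|_2 \to 0$. The main obstacle will be the careful bookkeeping in the $q$-Wick formalism, in particular bounding the maximal number of $e_1$-annihilations during the combined left action of $W(F)$ and right action of $y'$ in the presence of $q$-weighted contractions $a_q(\xi)(e_1 \otimes \cdots \otimes e_n) = \sum_i q^{i-1}\langle\xi, e_i\rangle \, e_1 \otimes \cdots \widehat{e_i}\cdots \otimes e_n$, but no new conceptual ingredient is needed beyond this counting argument.
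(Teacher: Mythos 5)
Your argument is correct and follows essentially the same route as the paper's proof: pass to the right Wick word $y'=Jy^*J$ to move $u_k\Omega$ to the rightmost position, choose $u_k$ with $u_k\Omega\to 0$ weakly, split $u_k\Omega$ along the decomposition of $\cFq(\C e_1)$ into low-degree components (whose coefficients vanish in the limit) and high-degree components $e_1^{\otimes j}$ with $j>m+n$, and observe that the latter, after the combined action of $x$ and $y'$, retain an $e_1$ factor and are therefore annihilated by the projection onto $\overline{\spa}\{e_2^{\otimes p}\}$. The only cosmetic difference is that you expand $u_k\Omega$ in the orthonormal basis $e_1^{\otimes j}/\sqrt{[j]_q!}$ and propose the concrete choice $u_k=\exp(ikW(e_1))$, whereas the paper truncates with a projection $Q_{n+m}$ and takes an arbitrary weakly null sequence of unitaries; these are equivalent.
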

\begin{proof}
Let $(e_n)_{n\in\N}$ be an orthonormal basis of $\cHR$. Assume that $x=W(e_{i_{1}}\otimes\dots\otimes e_{i_{n}})$ and $y=W(e_{j_{1}}\otimes\dots\otimes e_{j_m})$; it clearly suffices because the span of such elements is equal to $\widetilde{\mathsf{M}}$. By definition of the trace on $\Gamma_q(\cHR)$ we have $\|\mathbb{E}_{\mathsf{B}} (xu_k y)\|_{2} = \|(\mathbb{E}_{\mathsf{B}}(xu_k y))\Omega\|$. Since the conditional expectation on the level of the Fock space is just the orthogonal projection (denoted $P$) onto the closed linear span of the set $\{e_{2}^{\otimes n}:n\in \N\}$, we get $\|(\mathbb{E}_{\mathsf{B}}(xu_k y))\Omega\|= \|P(x u_k y\Omega)\|$. Note now that as the left and right actions of $y$ on $\Omega$ produce the same result, $e_{j_{1}}\otimes\dots\otimes e_{j_m}$, we can change $y$ to its right version, $W_{r}(e_{j_{1}}\otimes\dots\otimes e_{j_m})$, denoted now by $\widetilde{y}$. Since $\widetilde{y} \in \mathsf{M}'$, we get $\|P(x u_k y\Omega)\|=\|P(x \widetilde{y} u_{k}\Omega)\|$.  We now choose the sequence $(u_k)_{k \in \N}$ -- it is an arbitrary sequence of unitaries in $\mathsf{A}$ such that the corresponding vectors $\eta_k:=u_{k}\Omega$ converge weakly to zero (such a sequence exists, because $\mathsf{A}$ is diffuse). Let $Q_{l}$ be the orthogonal projection from $\mathcal{F}_{q}(\C e_{1})$ onto $\textrm{span}\{e_{1}^{\otimes j}: j \leqslant l\}$. Then for any $l$ the sequence $(Q_{l} \eta_{k})_{k \in \N}$ converges to zero in norm. Therefore to check that $\lim_{k\to\infty} \|P(x\widetilde{y} \eta_k)\|=0$, it suffices to do it for $\eta_k$ replaced by $(\mathds{1}-Q_l)\eta_k$. We now choose $l=n+m$. Therefore any $\eta_k$ consists solely of tensors $e_{1}^{\otimes d}$, where $d\geqslant n+m+1$. Since $x$ can be written as a sum of products of $n$ (in total) creation and annihilation operators and $y$ can be decomposed similarly into products of $m$ creation and annihilation operators, any simple tensor appearing in $x\widetilde{y}(\mathds{1} - Q_{n+m})\eta_{k}$ will contain at least one $e_{1}$. But all such simple tensors are orthogonal to $\cFq(\C e_{2})$, so they are killed by $P$.
\end{proof}
\begin{cor}
There does not exist a unitary $u \in \Gamma_q(\cHR)$ such that $u(W(e_1))''u^{\ast}=(W(e_{2}))''$, where the vectors $e_{1}$ and $e_{2}$ are orthogonal.
\end{cor}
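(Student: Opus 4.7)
The plan is to simply combine the preceding proposition with Popa's intertwining criterion. By Proposition \ref{Prop:orthogonal}, we have produced a sequence $(u_k)_{k \in \N}$ of unitaries in $\mathsf{A} = (W(e_1))''$ such that $\lim_{k \to \infty} \|\mathbb{E}_{\mathsf{B}}(x u_k y)\|_2 = 0$ for all $x, y$ in the algebra of Wick words $\widetilde{\mathsf{M}}$.

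First I would invoke the remark following Popa's proposition, which guarantees that verifying the asymptotic orthogonality condition on the strongly dense $*$-subalgebra $\widetilde{\mathsf{M}}$ suffices to establish it on all of $\mathsf{M}$. The standard argument goes through Kaplansky's density theorem: given arbitrary $x, y \in \mathsf{M}$ and $\varepsilon > 0$, one approximates them in the strong operator topology by elements $x', y'$ of $\widetilde{\mathsf{M}}$ with $\|x'\|, \|y'\| \leq \|x\|, \|y\|$ respectively, and then uses the triangle inequality together with the fact that $\mathbb{E}_{\mathsf{B}}$ is a contraction on $L^2$ and that $\|u_k\| = 1$ for all $k$.

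Once the asymptotic orthogonality condition $\lim_{k \to \infty} \|\mathbb{E}_{\mathsf{B}}(x u_k y)\|_2 = 0$ holds for all $x, y \in \mathsf{M}$, the Popa intertwining criterion stated above directly concludes that there is no unitary $u \in \mathsf{M}$ with $u \mathsf{A} u^* = \mathsf{B}$, which is precisely the statement of the corollary. There is no real obstacle here, as all the nontrivial content has already been established in Proposition \ref{Prop:orthogonal}; the corollary is a bookkeeping step that records the application of Popa's theorem.
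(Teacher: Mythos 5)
Your proposal is correct and is exactly the argument the paper intends: the corollary follows by combining Proposition \ref{Prop:orthogonal} with Popa's intertwining criterion, using the preceding remark (via Kaplansky density) to pass from the Wick-word algebra to all of $\mathsf{M}$.
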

\subsection{General case}
Let us check now if the method used for a pair of orthogonal vectors can be applied in a more general setting. Assume now that $e_{1}$ and $v$ are two unit vectors and write $v=\alpha e_{1} + \beta e_{2}$, where $e_{2} \perp e_{1}$, $\alpha^2+\beta^2=1$, and $\beta \neq 0$. We fix now an orthonormal basis $(e_{n})_{n \in \N}$ of $\cHR$ (if $\cHR$ is finite-dimensional then this should be a finite sequence).
\begin{prop}
The masas $\mathsf{A}:= W(v)''$ and $\mathsf{B}:=(W(e_1))''$ are not unitarily conjugate.
\end{prop}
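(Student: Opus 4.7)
The plan is to adapt the strategy from Proposition \ref{Prop:orthogonal} and verify Popa's intertwining criterion on the algebra $\widetilde{\mathsf{M}}$ of Wick words. If $\alpha=0$, then $v=\pm e_{2}$ and the result follows from Proposition \ref{Prop:orthogonal}, so I assume $\alpha\neq 0$, whence $|\alpha|<1$. By linearity I take basic Wick words $x=W(e_{i_{1}}\otimes\cdots\otimes e_{i_{n}})$ and $y=W(e_{j_{1}}\otimes\cdots\otimes e_{j_{m}})$ and set $N:=n+m$. First I would choose any sequence $(u_{k})\subset\mathcal{U}(\mathsf{A})$ with $u_{k}\Omega\to 0$ weakly in $\cFq(\cH)$, which exists because $\mathsf{A}$ is diffuse. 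Exactly as in Proposition \ref{Prop:orthogonal}, using that $\widetilde{y}:=W_{r}(e_{j_{1}}\otimes\cdots\otimes e_{j_{m}})$ commutes with $u_{k}\in\mathsf{A}$ and $\widetilde{y}\Omega=y\Omega$, one obtains
\[
\|\mathbb{E}_{\mathsf{B}}(xu_{k}y)\|_{2}=\|Px\widetilde{y}u_{k}\Omega\|,
\]
where $P$ is the orthogonal projection of $\cFq(\cH)$ onto $\cFq(\C e_{1})$. The task reduces to showing that this quantity tends to $0$.

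Since $u_{k}\Omega\in\mathsf{A}\Omega=\overline{\textrm{span}}\{v^{\otimes l}:l\geq 0\}$ and the vectors $v^{\otimes l}$ lie in distinct Fock levels (hence are orthogonal), I would expand
\[
u_{k}\Omega=\sum_{l\geq 0}d_{l}^{(k)}\frac{v^{\otimes l}}{\|v^{\otimes l}\|},\qquad \sum_{l}|d_{l}^{(k)}|^{2}=1,
\]
and weak convergence forces $d_{l}^{(k)}\to 0$ for every fixed $l$. The core estimate is the exponential decay
\[
\frac{\|Px\widetilde{y}v^{\otimes l}\|}{\|v^{\otimes l}\|}\leq C\,(l+1)^{N/2}|\alpha|^{l},\qquad l\geq 0,
\]
with $C$ depending on $x,y,\alpha,\beta$. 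To prove it, write $v^{\otimes l}=\sum_{j=0}^{l}\alpha^{l-j}\beta^{j}\zeta_{j}$ with $\zeta_{j}:=\sum_{|I|=j}\sigma_{I}$, where $\sigma_{I}$ is the simple tensor carrying $e_{2}$ at the positions $I\subseteq\{1,\ldots,l\}$ and $e_{1}$ elsewhere. The $\zeta_{j}$ are mutually $q$-orthogonal (distinct multisets of letters), and the identity $\langle\sigma_{I},\sigma_{I'}\rangle_{q}=\sum_{\pi\in S_{l},\,\pi(I)=I'}q^{i(\pi)}$, summed over $|I|=|I'|=j$, collapses cleanly to
\[
\|\zeta_{j}\|^{2}=\binom{l}{j}[l]_{q}!=\binom{l}{j}\|v^{\otimes l}\|^{2}.
\]
The crucial vanishing is that $Px\widetilde{y}\zeta_{j}=0$ whenever $j>N$: each of the $j$ copies of $e_{2}$ in a simple tensor of $\zeta_{j}$ must be annihilated by one of the $N$ creation/annihilation operators making up $x\widetilde{y}$, which is impossible if $j>N$. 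Combining this vanishing with $\|Px\widetilde{y}\zeta_{j}\|\leq \|x\|\|\widetilde{y}\|\|\zeta_{j}\|$ and $\binom{l}{j}\leq l^{j}$ yields the displayed estimate.

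The conclusion is a truncation argument. Let $Q_{L}$ be the orthogonal projection onto $\textrm{span}\{v^{\otimes l}:l\leq L\}$. Since $u_{k}\Omega\to 0$ weakly and $Q_{L}$ has finite-dimensional range, $Q_{L}u_{k}\Omega\to 0$ in norm, so $\|Px\widetilde{y}Q_{L}u_{k}\Omega\|\to 0$ as $k\to\infty$. By the exponential estimate and Cauchy--Schwarz,
\[
\|Px\widetilde{y}(I-Q_{L})u_{k}\Omega\|\leq \Bigl(\sum_{l>L}C^{2}(l+1)^{N}|\alpha|^{2l}\Bigr)^{1/2}
\]
uniformly in $k$, and the right-hand side tends to $0$ as $L\to\infty$ because $|\alpha|<1$. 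The main obstacle is the exponential estimate; the clean identity $\|\zeta_{j}\|^{2}=\binom{l}{j}[l]_{q}!$ is the key ingredient, as a naive individual bound on $\|\sigma_{I}\|$ would not be sharp enough (notably when $q<0$, where $[l]_{q}!$ may be much smaller than $[l]_{|q|}!$).
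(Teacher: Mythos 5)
Your argument is correct and follows the same architecture as the paper's proof: Popa's intertwining criterion checked on Wick words, a weakly null sequence $u_k\Omega$ in $\overline{\mathsf{A}\Omega}=\cFq(\C v)$, the expansion of $v^{\otimes l}$ according to the number of $e_2$-factors, the observation that $P x\widetilde{y}$ annihilates every term carrying more than $n+m$ copies of $e_2$, and the exponential decay supplied by $|\alpha|<1$, assembled by a truncation with a finite-rank projection. The one place where you genuinely diverge is the norm estimate for the surviving terms. The paper controls the truncated tensor by a term-by-term triangle inequality over the $\binom{j}{k}$ individual simple tensors, invoking Ricard's bound $\|e_1^{\otimes k}\otimes e_2^{\otimes (j-k)}\|\leq C\,(1-q)^{-j/2}$; you instead group all simple tensors with exactly $j$ copies of $e_2$ into a single vector $\zeta_j$ and compute its $q$-norm exactly. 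Your identity $\|\zeta_j\|^2=\binom{l}{j}[l]_q!=\binom{l}{j}\|v^{\otimes l}\|^2$ is correct: for fixed $I'$ the map $\pi\mapsto\pi(I')$ collapses the double sum $\sum_{I,I'}\sum_{\pi:\pi(I)=I'}q^{i(\pi)}$ to $\binom{l}{j}\sum_{\pi\in S_l}q^{i(\pi)}$, exactly as you say. This gives the ratio $\|\zeta_j\|/\|v^{\otimes l}\|=\binom{l}{j}^{1/2}$ with no external input and uniformly in $q\in(-1,1)$; in particular it sidesteps the point you raise about $q<0$, where the $q$-norms of individual non-block rearrangements are not transparently dominated by the block tensor's norm, so your version of the key estimate is cleaner and more self-contained. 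The remaining steps (handling $\alpha=0$ by reduction to the orthogonal case, the Cauchy--Schwarz bound on the tail, and the $\varepsilon/2$ argument combining $k\to\infty$ with $L\to\infty$) are all sound and match the paper's conclusion.
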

\begin{proof}
We proceed exactly as in the proof of Proposition \ref{Prop:orthogonal} and also use the same notation; note however that this time $P$ will be the orthogonal projection onto $\overline{\text{span}}\{e_1^{\otimes n}: n\geqslant 0\}$. The only problem is that now we do not have orthogonality. Write $\eta_{k} = \sum_{j\in\N} a_{j}^{(k)} v^{\otimes j}$. We have $\|v^{\otimes j}\|\simeq \left(\frac{1}{\sqrt{1-q}}\right)^{j}$ (cf. \cite[Third displayed formula on page 660]{ricard05qfactor}). Let us compute $v^{\otimes j}$:
\[
v^{\otimes j} = \sum_{k=0}^{j} \alpha^{j-k} \beta^{k} R_{j,k}(e_{1}^{\otimes (j-k)}\otimes e_{2}^{\otimes k}),
\]
where $R_{j,k}(e_{1}^{\otimes (j-k)} \otimes e_2^{\otimes k})$ is equal to the sum of all simple tensors such that $j-k$ factors are equal to $e_{1}$ and $k$ factors are equal to $e_{2}$; there are $\binom{j}{k}$ such simple tensors. Note now that if $k\geqslant n+m+1$ then after applying $x\widetilde{y}$ at least one $e_2$ remains as a factor, so the orthogonal projection $P$ kills it. We conclude that it suffices to perform the summation in the displayed formula above only up to $j\wedge (n+m)$; we call the resulting tensors $\widetilde{v}^{\otimes j}$ and the corresponding $\eta_k$ is dubbed $\widetilde{\eta}_k$. Since $k$ is bounded, the number $\binom{j}{k}$ is polynomial in $j$, so if we get exponential decay of the norm of the individual factors in the sum, the factor $\binom{j}{k}$ does not affect the overall convergence. After neglecting the terms with $k > n+m$, we use the trivial estimate $\|P(x\widetilde{y} \widetilde{\eta}_k)\|\leqslant C \|\widetilde{\eta}_k\|$. The proof will be completed if we show that $\Vert \widetilde{\eta}_k\Vert$ converges to $0$. Note now that the square of the norm of $\widetilde{\eta}_k$ is equal to $\sum_{j\in \N} |a_{j}^{(k)}|^2 \cdot \|\widetilde{v}^{\otimes j}\|^2$. Recall that $\|\eta_k\|\leqslant 1$ and $\|v^{\otimes j}\| \simeq \left(\frac{1}{\sqrt{1-q}}\right)^{j}$, so the coefficients $a_{j}^{(k)}$ satisfy $\sum_{j\in \N}  |a_{j}^{(k)}|^2 \left(\frac{1}{1-q}\right)^{j} \lesssim 1$. It therefore suffices to show that $\lim_{j\to \infty} (1-q)^{j} \|\widetilde{v}^{\otimes j}\|^2=0$, remembering that the vectors $\eta_k$ converge weakly to $0$, so we only care about large $j$. We estimate the norm of $\widetilde{v}^{\otimes j}$ by the triangle inequality:
\[
\|\widetilde{v}^{\otimes j}\| \leqslant \sum_{k=0}^{j \wedge (n+m)} |\alpha|^{j-k} |\beta|^{k} \binom{j}{k} \|e_{1}^{\otimes k}\otimes e_{2}^{j-k}\|.
\]
Since $k$ is bounded, one can easily get an estimate of the form $\|e_{1}^{\otimes k} \otimes e_{2}^{\otimes (j-k)}\| \leqslant C \left(\frac{1}{\sqrt{1-q}}\right)^{j}$ (cf. \cite[Remark 2]{ricard05qfactor}). This yields $\|\widetilde{v}^{\otimes j}\| \leqslant C\left(\frac{1}{\sqrt{1-q}}\right)^{j} |\alpha|^{j} \cdot j^{k}$. It is the inequality that we wanted, i.e. we find out that $(1-q)^{j} \|\widetilde{v}^{\otimes j}\|^2$ is bounded by $C j^{k} |\alpha|^{j}$, which converges to zero very fast, as we assumed that $|\alpha|<1$. This finishes the proof of the proposition.
\end{proof}

\begin{rmk}
The result above exhibits in particular explicitly a continuuum of non-mutually conjugate singular masas in $\Gamma_q(\cHR)$ (in fact the proofs show that they do not even embed into each other inside $\Gamma_q(\cHR)$, see \cite[Theorem 2.1 and Corollary 2.3]{PopaIntertwining}). Very recently Popa showed in \cite{Popanew} the existence of such uncountable families in a broad class of von Neumann algebras.
\end{rmk}


\begin{rmk}Generator masas can be also studied for the so-called mixed q-Gaussians (cf. \cite{Speicher}). They are known to be masas by \cite{AdamSimeng}, and in fact an application of methods of that paper and general results of \cite{bikrammukherjee16qawfactor} show that they are singular, as noted by Simeng Wang. There seems to be however nothing known about the `radial' subalgebra in this more general context.  Is it a masa? Is it isomorphic to a generator one?
\end{rmk}

\subsection*{Acknowledgements}

\noindent For the first author:

\vspace{0.3cm}

\begin{tabular}{ | p{1.0cm} | p{11cm} |}
\hline \:
\includegraphics[width=10mm]{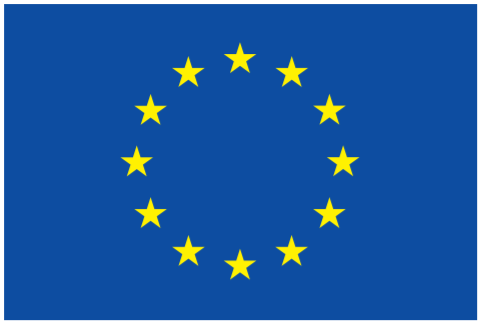}  &
This project has received funding from the European Union's Horizon 2020 research and innovation programme under grant agreement No. 702139. \\ \hline
\end{tabular}

\vspace{0.3cm}

   The second author was partially supported by the National Science Centre (NCN) grant no.~2014/14/E/ST1/00525. The third author was partially supported by the National Science Centre (NCN) grant no.~2016/21/N/ST1/02499. The work on the paper was started during the visit of the first author to IMPAN in January 2017, partially funded by the Warsaw Center of Mathematics and Computer Science.

\end{document}